\documentclass[11pt,letterpaper,reqno]{amsart}
\usepackage[T1]{fontenc}
\usepackage{amsmath,amssymb,amsthm}
\usepackage{enumitem}
\usepackage{microtype}
\usepackage{doi}
\usepackage[margin=1in]{geometry}

\usepackage[dvipsnames]{xcolor}
\usepackage{hyperref}
\usepackage{bookmark}

\hypersetup{
colorlinks=true,linkcolor=RoyalBlue,citecolor=ForestGreen!65!black,urlcolor=BrickRed
}

\newtheorem{thm}{Theorem}[section]
\newtheorem{lem}[thm]{Lemma}
\newtheorem{prop}[thm]{Proposition}
\newtheorem{cor}[thm]{Corollary}
\theoremstyle{remark}

\numberwithin{equation}{section}

\newcommand{\T}{\mathbb T}
\newcommand{\D}{\mathbb D}
\newcommand{\C}{\mathbb C}
\newcommand{\R}{\mathbb R}
\newcommand{\norm}[1]{\left\lVert #1\right\rVert}
\newcommand{\dd}{\,\mathrm d}
\newcommand{\e}{\mathrm e}
\newcommand{\Rea}{\operatorname{Re}}
\newcommand{\PV}{\operatorname{PV}}
\newcommand{\Ent}{\operatorname{Ent}}
\newcommand{\Var}{\operatorname{Var}}
\newcommand{\Dop}{\mathcal D}

\begin{document}

\title[Baernstein quasi-norm monotonicity]
{Baernstein's Quasi-Norm Monotonicity Conjecture for Polynomials with Unimodular Zeros}

\dedicatory{Dedicated to the memory of Professor Albert Baernstein II.}

\author[T.~Zhang]{Teng Zhang}

\address{School of Mathematics and Statistics, Xi'an Jiaotong University, Xi'an 710049, P. R. China}
\email{teng.zhang@stu.xjtu.edu.cn}

\subjclass[2020]{Primary 30A10, 30C10; Secondary 26D15, 30H10, 41A17}

\keywords{Baernstein conjecture; integral means; quasi-norms; self-inversive polynomials; finite Blaschke products; relative entropy; Hardy--Bergman identity; Visser inequality; Erd\H{o}s--Szekeres products}

\begin{abstract}
Let $m$ denote the normalized Haar measure on the unit circle $\T$. For $0<r<\infty$, define $\lVert f\rVert_{r}:=\left(\int_{\T}|f|^{r}\,\dd m\right)^{1/r}$, with $\lVert f\rVert_{0}$ and $\lVert f\rVert_{\infty}$ interpreted as the geometric mean and the supremum norm, respectively.
Set $Q_n(z)=1+z^n$. We prove that, for every nonzero polynomial $p$ of degree $n$ whose zeros all lie on $\T$, \[ \frac{\norm{p}_s}{\norm{Q_n}_s} \le \frac{\norm{p}_t}{\norm{Q_n}_t}, \qquad 0\le s\le t\le\infty. \] This settles Baernstein's quasi-norm monotonicity conjecture. As
corollaries, we obtain an $L^r$ extension of Visser's coefficient inequality, the sharp O'Hara--Rodriguez inequality and its higher-power analogues, the Erd\H{o}s--Szekeres product bound $ \left\|\prod_{j=1}^N(1-z^{s_j})\right\|_\infty\ge2\sqrt N $ for all positive integers $s_1,\ldots,s_N$ and Agler--McCarthy's entropy conjecture. We also provide a Lean 4 formalization of the main results.
\end{abstract}

\maketitle
\hypersetup{pdftitle={Baernstein's Quasi-Norm Monotonicity Conjecture for Polynomials with Unimodular Zeros},
 pdfauthor={Teng Zhang},pdfsubject={Baernstein quasi-norm monotonicity for unimodular-root polynomials}}
\setcounter{tocdepth}{1}
\tableofcontents

\section{Introduction}

\subsection{Baernstein's conjecture and its consequences}

Let $\D=\{z\in\C:|z|<1\}$ and $\T=\partial\D$, and let $m$ denote normalized Haar measure on $\T$, given by
$\dd m(\e^{i\theta})=\dd\theta/(2\pi)$ for $0\le\theta<2\pi$. For a nonzero polynomial $f$ and $0<r<\infty$, write
\[
  \norm{f}_r=\left(\int_{\T}|f|^r\,\dd m\right)^{1/r},
\]
and use the continuous endpoint conventions
\[
  \norm{f}_0=\exp\int_{\T}\log|f|\,\dd m,
  \qquad
  \norm{f}_\infty=\operatorname*{ess\,sup}_{\T}|f|.
\]
Thus $\norm{\cdot}_r$ is a norm for $r\ge1$, a quasi-norm for $0<r<1$, and the Mahler measure when $r=0$.

The principal aim of this paper is to prove the quasi-norm monotonicity conjecture for polynomials with unimodular zeros, posed by A.~Baernstein II in 2008; see \cite[Conjecture~2.2]{AM21}. We settle it in the following form.

\begin{thm}[Baernstein's conjecture]\label{thm:main}
Let $n\ge1$, and let $p$ be a nonzero complex polynomial of degree $n$ all of whose zeros lie on $\T$. For $Q_n(z)=1+z^n$ and $0\le s\le t\le\infty$,
\begin{equation}\label{eq:main}
  \frac{\norm{p}_s}{\norm{Q_n}_s}
  \le
  \frac{\norm{p}_t}{\norm{Q_n}_t}.
\end{equation}
Equivalently, the function
\[
  r\longmapsto \log\frac{\norm{p}_r}{\norm{Q_n}_r}
\]
is nondecreasing on $[0,\infty]$.
\end{thm}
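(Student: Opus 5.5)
We reduce the statement to a derivative inequality in $r$. Write $p(z)=c\prod_{j=1}^n(z-\zeta_j)$ with $|\zeta_j|=1$. Both sides of \eqref{eq:main} are unchanged under $p\mapsto \lambda p$, so we may take $\norm{p}_0=1$; note $\norm{Q_n}_0=1$ as well. The functions $\Phi_p(r)=\log\norm{p}_r$ and $\Phi_Q(r)=\log\norm{Q_n}_r$ are continuous on $[0,\infty]$, including the endpoints, by the standard limits $\norm{f}_r\to\norm{f}_0$ as $r\to0^+$ and $\norm{f}_r\to\norm{f}_\infty$ as $r\to\infty$. Both are real-analytic on $(0,\infty)$. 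Hence it suffices to prove $\Phi_p'(r)\ge \Phi_Q'(r)$ for $0<r<\infty$.

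The idea is to express $\Phi'$ through relative entropy. Set $\mu_r=|p|^r\dd m/\int|p|^r\dd m$. Then
\[
r^2\Phi_p'(r)=\Ent(\mu_r\,\|\,m)=:E_p(r),
\]
and similarly for $Q_n$, with $\mu_r$ built from $|Q_n|^r$. So the claim becomes $E_p(r)\ge E_{Q_n}(r)$ for every $r>0$: among degree-$n$ polynomials with unimodular zeros, $1+z^n$ minimizes the entropy of $|p|^r/\norm{p}_r^r$.

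The plan for this extremal problem has three steps.

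\begin{enumerate}[label=(\roman*)]
\item \textbf{Self-inversive structure.} On $\T$, a polynomial with all zeros on $\T$ satisfies $|p|=\prod_j|e^{i\theta}-\zeta_j|$. Up to a unimodular factor, $p(z)^2z^{-n}$ is real on $\T$. Hence $\log|p|$ is a sum of translates of $\log|2\sin(\theta/2)|$, and $p'/p$ has real part $n/2$ on $\T$ (the Visser/Lucas identity $\Rea\bigl(zp'(z)/p(z)\bigr)=n/2$). This means $zp'/p-n/2$ is purely imaginary on $\T$. Equivalently, $h=p^2/z^n$ is real with unimodular phase, and $B=z^n\overline{p}/p$ is essentially a finite Blaschke product of degree $n$ (constant here, but the same reasoning applies to $p'$).

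\item \textbf{Reduction to a one-variable entropy statement.} Parametrize $\T$ by the argument of $p$. The boundary map $\theta\mapsto \arg p(e^{i\theta})$ has derivative exactly $n/2$ off the zeros. So $\psi=\arg p-n\theta/2$ is piecewise constant, jumping by $\pi$ at each zero. Use instead the map $z\mapsto w=e^{i\phi(\theta)}$, where $\phi=\arg\bigl(zp'(z)\bigr)$ or more generally is derived from the Blaschke product $zp'/(n\,\overline{\ \cdot\ })$. I would look for a degree-$n$ inner function $\beta$ with $|p|=|1+\beta|$ pointwise on $\T$, up to constants. This is natural: $p=Q_n\circ$(something) exactly when zeros are equispaced. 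For a general $p$ one instead writes $|p|^2=|q|^2+|q^*|^2$-type decompositions. Then apply the Hardy--Bergman identity: for an inner function $\beta$ of degree $n$, the pushforward of $m$ under $\beta$ is $m$ (since $\beta(0)$ can be normalized), but with Jacobian $|\beta'|$. Equispaced zeros correspond to $|\beta'|\equiv n$. The entropy of $|1+\beta|^r$ would then decompose as the entropy for $Q_n$ plus a nonnegative term coming from the Jacobian distortion, namely a relative entropy $\Ent(\cdot\,\|\,\cdot)\ge0$, by the data-processing (chain-rule) inequality for relative entropy under the measure-preserving map $\beta$.

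\item \textbf{Endpoints.} At $s=0$ the claim follows by continuity from the $r\to0^+$ statement. At $t=\infty$ it follows similarly; alternatively it holds directly because $\norm{p}_\infty\ge2\norm{p}_0$ for such $p$, which is the $r=\infty$ comparison.
\end{enumerate}

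The main obstacle is step (ii): producing the representation $|p|=|c|\,|1+\beta|$, or a weighted analogue, with $\beta$ inner of degree $n$ for arbitrary unimodular zeros. Then one must make the chain-rule inequality for entropy go in the correct direction with the exact constant so that equality holds for $Q_n$. I would first try $\beta=$ the Blaschke product whose zeros are those of $p'$ (all in $\overline{\D}$ by Gauss--Lucas), exploiting $\Rea(zp'/p)=n/2$. If that fails, I would fall back to a convexity-in-$r$ argument comparing $E_p$ and $E_Q$ via a Lyapunov-type interpolation.
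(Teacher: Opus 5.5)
Your outer reduction is sound and matches the paper. Scaling invariance holds, and $r^2\frac{\dd}{\dd r}\log\norm{p}_r=\mathsf D(\rho_{p,r}\Vert m)$, so it suffices to show $\mathsf D(\rho_{p,r}\Vert m)\ge\mathsf D(\rho_{Q_n,r}\Vert m)$ for $r>0$, with the endpoints recovered by the limits $r\to0^+$ and $r\to\infty$. (Your ``alternative'' at $\infty$ only covers $s=0$.)

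The gap is step (ii), which carries the whole difficulty, and as formulated it cannot work. A representation $|p|=|c|\,|1+\beta|$ on $\T$ with $\beta$ inner exists only in the extremal case. Indeed, $1+\beta$ has positive real part in $\D$ and is therefore outer, and $p$ is outer because it has no zeros in $\D$. Equal moduli on $\T$ then force $1+\beta=\lambda p$ for a constant $\lambda$, so $\beta$ is an inner polynomial, $\beta=\lambda' z^n$, and the zeros of $p$ are equispaced. Nor would such a representation produce a ``Jacobian distortion'' term. If $\beta(0)=0$ then $\beta_*m=m$, so the entropy of $|1+\beta|^r$ equals that of $|1+\zeta|^r$ exactly; $|\beta'|$ never enters.

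Your candidate for $\beta$ is in fact the paper's $B$. After normalizing $p=p^*$, $p(0)=1$, $p$ monic, one sets $q=p-zp'/n$ and $B=q^*/q=zp'/(nq)$. This is a degree-$n$ Blaschke product vanishing at $0$ and at the zeros of $p'$. But the correct identity is $p=q(1+B)$ with a nonconstant zero-free factor $q$. (Zero-freeness of $q$ on $\T$ needs simple zeros, so multiplicities must be removed afterwards by perturbation.) This gives
\[
\mathsf D(\rho_{p,r}\Vert m)-\mathsf D(\rho_{Q_n,r}\Vert m)=\frac{M_{Q_n}(r)}{M_p(r)}\Ent_{\mu_r}(|q|^r)+\frac{r}{M_p(r)}\mathcal P_r(q,B).
\]
Only the first term is controlled by Jensen, convexity, or data processing. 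Pushing forward through $B$ merely replaces $|q|^r$ by its conditional expectation given $B$ and leaves the same cross term. That cross term, $\mathcal P_r(q,B)=\int_{\T}|q|^r|1+B|^r(\log|1+B|-\ell_r)\,\dd m$, has no a priori sign.

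Proving $\mathcal P_r\ge0$ is the missing idea. The paper writes $r=2a$ and $g=q^a$. It then uses the Fourier coefficients $\dot\alpha_k(a)$ of the model density and the Dirichlet recurrence $akn\,c_k=(a+k)V_{k-1}-(a-k)V_k$ to obtain $\mathcal P_{2a}=\frac{M_{Q_n}(2a)}{an}\int_{\D}\Rea H_a(B)\,|g'|^2\,\dd A$. Finally it proves the universal kernel positivity $\Rea H_a>0$ on $\D$. For $0<a\le1$ this uses coefficient bounds; for $a\ge1$ it uses a Stieltjes representation, a positive Abel decomposition, and a finite-basis inequality.

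Your Lyapunov/convexity fallback cannot replace this. Log-convexity of $r\mapsto\int|f|^r\,\dd m$ holds for every $f$, whereas the monotonicity fails for, say, constant $f$. So the unimodular-zero structure must enter quantitatively.
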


A Lean~4 formalization of Theorem~\ref{thm:main}, including arbitrary
root multiplicities and the endpoint exponents $0$ and $\infty$,
is available in~\cite{Zha26}.

Write $p(z)=\sum_{k=0}^n a_kz^k$ and put $C_r:=\norm{1+z}_r$, with
$C_0=1$ and $C_\infty=2$. Since $z\mapsto z^n$ preserves $m$,
$\norm{Q_n}_r=C_r$. Theorem~\ref{thm:main} therefore gives the sharp comparison
\[
 \norm{p}_t\ge\frac{C_t}{C_s}\norm{p}_s,
 \qquad 0\le s\le t\le\infty.
\]
Equality is attained by $p(z)=c(z^n-\omega)$, where $c\ne0$ and $|\omega|=1$.
Its principal consequences, proved in Section~\ref{sec:applications}, are:
\begin{itemize}[leftmargin=*,itemsep=0.6ex]
\item The case $s=0$ gives the sharp $L^r$ extension
\[
 |a_0|+|a_n|\le\frac2{C_r}\norm{p}_r
\]
of Visser's coefficient inequality \cite[Theorem~3]{Vis45}.
\item The case $(s,t)=(2,\infty)$ gives the O'Hara--Rodriguez inequality
\cite[Corollary~1]{OR74},
\[
 \norm{p}_\infty^2\ge2\sum_{k=0}^n|a_k|^2.
\]
Together with Tang's arithmetic estimate \cite[Theorem~3.2]{Tan26}, this yields
\[
 \left\|\prod_{j=1}^N(1-z^{s_j})\right\|_\infty\ge2\sqrt N
 \qquad(s_j\in\mathbb N,\ s_j\ge1),
\]
recovering his improvement of the Erd\H{o}s--Szekeres bound \cite{ES59}.
\item Differentiation with respect to the exponent gives a relative-entropy
comparison for every $r>0$. At $r=2$, it is the homogeneous entropy
conjecture of Agler and McCarthy \cite[Conjecture~1.4]{AM21}, recently
proved by Lei and Zhang \cite[Theorem~1.3]{LZ26}.
\end{itemize}
Further consequences include sharp higher-moment, mixed derivative, and
weighted power-sum inequalities. The entropy problem is connected with
the Krzy\.z conjecture \cite{Krz68}: in the program of Agler and McCarthy,
the entropy inequality and an additional full-degree condition for extremal
functions together imply that conjecture \cite[Theorem~1.8]{AM21}.

\subsection{Relation to previous work}

The study of integral means of analytic functions can be traced back to Hardy \cite{Har15}, who investigated the dependence of boundary means on the radius. Baernstein \cite{Bae74} later introduced the star-function and circular symmetrization methods, which provide a powerful framework for sharp comparisons of integral means.
Theorem~\ref{thm:main} is related to the classical theory of multiplier inequalities for polynomials. The Schur--Szeg\H{o} composition theorem of de Bruijn and Springer \cite{BS47} gives sharp inequalities for suitable polynomial compositions. Arestov \cite{Are82,Are90} extended their results to a broad class of integral functionals, including quasi-norms, and Pritsker \cite{Pri17} formulated related sharp integral-norm inequalities in terms of polynomial multipliers. These results can imply the endpoint case $s=0$ of Theorem~\ref{thm:main}, but they do not compare two arbitrary positive exponents.

Closely related developments concern sharp derivative estimates under
restrictions on the zeros. Motivated by a conjecture of Erd\H{o}s
\cite{Erd40}, Lax \cite{Lax44} proved that every polynomial $p$ of
degree $n$ having no zeros in $\D$ satisfies
\[
\norm{p'}_\infty\le \frac n2\norm{p}_\infty,
\]
and the constant $n/2$ is sharp. Ankeny--Rivlin \cite{AR55},
Boas--Rahman \cite{BR62}, and Rahman--Schmeisser
\cite{RS88,RS02} subsequently developed related pointwise and
integral-norm inequalities under various restrictions on the zeros.
These results compare a polynomial with its derivative at a fixed
exponent, whereas Theorem~\ref{thm:main} compares the same polynomial
at different exponents.

Two neighboring extremal problems further clarify the scope of the
present result. First, the Erd\H{o}s--Tur\'an theorem \cite{ET50} controls the
angular discrepancy of the zeros in terms of a quantitative size
parameter of the polynomial. By
contrast, the endpoint case of Theorem~\ref{thm:main} yields only the
distribution-free estimate
\[
\frac{\norm{p}_\infty}{|a_n|}\ge 2.
\]
Thus any refinement incorporating a stability term that measures the
discrepancy of the zero-counting measure from normalized Haar measure
would require genuinely new information beyond a further
specialization of the exponents in Theorem~\ref{thm:main}.
Second, Erd\H{o}s, Herzog, and Piranian \cite[p.~142, Problem~12]{EHP58} asked whether, among monic
polynomials of fixed degree, the lemniscate
$
\{z:|p(z)|=1\}
$
has maximal length for $p(z)=z^n-1$. Tao \cite[Theorem~1.1(iv)]{Tao25} recently proved this
conjecture for all sufficiently large degrees. If this lemniscate contains no
critical point of $p$, its inverse branches give
\[
\operatorname{length}\{z:|p(z)|=1\}
=
\int_0^{2\pi}
\sum_{p(z)=\e^{it}}
\frac{\dd t}{|p'(z)|}.
\]
This functional depends both on derivatives and on a moving planar
level set, whereas Theorem~\ref{thm:main} controls integral means of
$|p|$ on the fixed circle $\T$. 

A more recent connection arises from the homogeneous entropy conjecture of Agler and McCarthy \cite{AM21}, which was introduced in their study of the Krzy\.z conjecture. They \cite[Section~10]{AM21} observed that Baernstein's conjecture implies the corresponding entropy inequality at the exponent $r=2$. Lei and Zhang  \cite[Theorem~1.3]{LZ26} recently proved this inequality and determined its equality cases.

\subsection{Proof strategy}

The proof has four steps.
\begin{enumerate}[label=\textnormal{(\arabic*)},leftmargin=*,itemsep=0.8ex]
\item \emph{Polar factorization and entropy.}
After a rotation and scaling, assume $p=p^*$, $p(0)=1$, and that $p$ is monic.
For simple zeros, the polar factor
$q=p-zp'/n$ is zero-free on $\overline\D$, and
$B=q^*/q$ is a finite Blaschke product with $B(0)=0$ and $B_*m=m$.
The factorization $p=q(1+B)$ and an entropy identity reduce the theorem
to the nonnegativity of a centered integral $\mathcal P_r(q,B)$
(Section~\ref{sec:reduction}).
\item \emph{A universal kernel.}
Writing $r=2a$ and $g=q^a$, Fourier expansion and a Dirichlet recurrence give
\[
 \mathcal P_{2a}(q,B)
 =\frac{M_{Q_n}(2a)}{an}
   \int_{\D}\Rea H_a(B(z))\,|g'(z)|^2\,\dd A(z),
\]
where $\dd A=\dd x\,\dd y/\pi$ and $H_a$ is independent of $p$ and $n$
(Section~\ref{sec:area}). Thus it suffices to prove $\Rea H_a>0$ in $\D$.
\item \emph{Kernel positivity.}
For $0<a\le1$, a sharp summable bound on the Taylor coefficients proves
positivity directly. For $a\ge1$, a Stieltjes representation reduces it
to a real principal-value transform. A positive Abel decomposition and
a finite-basis inequality prove that this transform is positive
(Sections~\ref{sec:small} and~\ref{sec:large}).
\item \emph{Limits.}
The entropy derivative now proves monotonicity for simple roots and
positive finite exponents. Perturbing the root arguments while preserving
their sum removes multiplicities. The limits at $0$ and $\infty$, followed
by undoing the normalization, complete the proof
(Section~\ref{sec:completion}).
\end{enumerate}
Section~\ref{sec:applications} gives the consequences stated above.

\medskip\noindent\textbf{Acknowledgements and AI tools disclosure. }
The authors are deeply grateful to Professor John McCarthy for his encouragement.

This work was supported by the China Scholarship Council, the Young Elite Scientists Sponsorship Program for PhD Students (China Association for Science and Technology), and the Fundamental Research Funds for the Central Universities at Xi'an Jiaotong University (Grant No.~xzy022024045).

An earlier version of this paper, in which all proofs had already been completed, did not involve the use of any AI tools. In the current version, we used ChatGPT for proofreading, correcting grammatical errors, and simplifying the presentation of the proofs. The overall ideas and arguments remain those developed by the authors in the earlier version.

The Lean~4 formalization in~\cite{Zha26} was generated using
OpenAI's Codex, and its proofs were checked by the Lean kernel.

\section{Normalization and entropy reduction}\label{sec:reduction}

This section isolates the two structural ingredients used throughout the proof: a polar factorization for self-inversive polynomials and the entropy identity obtained by differentiating an $L^r$ mean. The objects introduced here recur in the sequel; the auxiliary notation used in later sections will be defined locally.

Fix an integer $n\ge1$. For every polynomial $f$ of degree at most $n$, its reflection relative to the fixed degree $n$ is
\begin{equation}\label{eq:reflection}
  f^*(z):=z^n\overline{f(1/\bar z)}.
\end{equation}
We also write
\begin{equation}\label{eq:euler-operator}
  \Dop f(z):=zf'(z)
\end{equation}
for the Euler operator.

\subsection{Self-inversive normalization and the polar factor}

We begin by removing the inessential scalar and rotational degrees of freedom. The same ideas also appear in \cite{LZ26}.

\begin{lem}\label{lem:normalization}
Let $P$ be a nonzero polynomial of degree $n$ whose zeros lie on $\T$. Then there are $c\ne0$ and $\eta\in\T$ such that
\begin{equation}\label{eq:normalized-p}
  p(z):=cP(\eta z)
  =1+a_1z+\cdots+a_{n-1}z^{n-1}+z^n
  \quad\text{and}\quad p=p^*,
\end{equation}
where $p^*$ is defined by \eqref{eq:reflection}. Moreover,
\begin{equation}\label{eq:normalization-norms}
  \norm{P}_r=|c|^{-1}\norm{p}_r,
  \qquad 0\le r\le\infty.
\end{equation}
Finally, for $Q_n(z)=1+z^n$,
\begin{equation}\label{eq:model-norm}
  \norm{Q_n}_r
  =2\left(
    \frac{\Gamma((r+1)/2)}{\sqrt\pi\,\Gamma(1+r/2)}
  \right)^{1/r},
  \qquad 0<r<\infty,
\end{equation}
while $\norm{Q_n}_0=1$ and $\norm{Q_n}_\infty=2$.
\end{lem}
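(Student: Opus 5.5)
We prove the three assertions of Lemma~\ref{lem:normalization} separately: the normalization, the norm identity \eqref{eq:normalization-norms}, and the formula \eqref{eq:model-norm}.

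\emph{The normalization.} Write $P(z)=A\prod_{j=1}^n(z-\zeta_j)$ with $A\ne0$ and $|\zeta_j|=1$. Then $P(0)=A(-1)^n\prod_j\zeta_j$ has modulus $|A|$, and the leading coefficient is $A$. For $\eta\in\T$, the polynomial $P(\eta z)$ has leading coefficient $A\eta^n$ and constant term $P(0)$. We choose $\eta$ with
\[
\eta^n=\frac{P(0)}{A}=(-1)^n\prod_j\zeta_j\in\T,
\]
and set $c=1/P(0)$. Then $p(z)=cP(\eta z)$ has constant term $1$ and leading coefficient $cA\eta^n=1$.

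For self-inversiveness, the zeros of $p$ are $w_j=\bar\eta\zeta_j\in\T$, so $p(z)=\prod_j(z-w_j)$. Since $p$ is monic, $p(0)=(-1)^n\prod_j w_j=1$. Using $\bar w_j=1/w_j$,
\[
p^*(z)=z^n\prod_j(1/z-\bar w_j)=\prod_j(1-\bar w_jz)=\prod_j(-\bar w_j)\prod_j(z-w_j).
\]
The prefactor is $(-1)^n\overline{\prod_j w_j}=\overline{p(0)}=1$, so $p^*=p$.

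\emph{The norm identity.} Rotation $z\mapsto\eta z$ preserves $m$, so $\norm{P(\eta\,\cdot)}_r=\norm{P}_r$ for every $0\le r\le\infty$. Scaling by $c$ gives $\norm{p}_r=|c|\,\norm{P}_r$. At $r=0$ this holds because $\log|cf|=\log|c|+\log|f|$, and at $r=\infty$ it is immediate. This proves \eqref{eq:normalization-norms}.

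\emph{The model norm.} Since $z\mapsto z^n$ preserves $m$, we have $\norm{Q_n}_r=\norm{1+z}_r$. On $\T$, $|1+\e^{i\theta}|=2|\cos(\theta/2)|$, so
\[
\norm{1+z}_r^r=\frac{2^r}{2\pi}\int_0^{2\pi}|\cos(\theta/2)|^r\dd\theta=\frac{2^r}{\pi}\int_0^{\pi}|\cos u|^r\dd u=\frac{2^{r+1}}{\pi}\int_0^{\pi/2}\cos^r u\dd u.
\]
The Wallis/Beta integral gives
\[
\int_0^{\pi/2}\cos^ru\dd u=\tfrac12 B\!\left(\tfrac{r+1}2,\tfrac12\right)=\frac{\sqrt\pi\,\Gamma((r+1)/2)}{2\Gamma(1+r/2)}.
\]
Hence
\[
\norm{1+z}_r^r=2^r\,\frac{\Gamma((r+1)/2)}{\sqrt\pi\,\Gamma(1+r/2)},
\]
and taking $r$-th roots yields \eqref{eq:model-norm}.

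For the endpoints, $\norm{1+z}_\infty=2$ is clear. By Jensen's formula, $\int_\T\log|1+z|\dd m=\log|1|=0$; this is valid since the zero $z=-1$ lies on $\T$, where Jensen still applies (or one can use $\int_0^{\pi}\log\cos(u/2)\,\dd u$ type identities). Hence $\norm{Q_n}_0=1$.

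The only step requiring any care is choosing $\eta$ and checking that the prefactor in $p^*$ equals exactly $1$; the rest is standard.
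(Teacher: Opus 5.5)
Your proof is correct and follows the paper's argument step for step. You make the same choices $\eta^n=P(0)/A$ and $c=1/P(0)$, and use the same factor identity $1-\bar w_j z=-\bar w_j(z-w_j)$ to get $p^*=\overline{p(0)}\,p=p$. The norm identity likewise comes from rotation invariance and homogeneity, and the formula for $\norm{Q_n}_r$ from $z\mapsto z^n$, the beta integral and Jensen's formula.
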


\begin{proof}
Write $P(z)=c_n\prod_{j=1}^n(z-\zeta_j)$, with $|\zeta_j|=1$, and let
$c_0=P(0)$. Then $\alpha:=c_0/c_n=(-1)^n\prod_j\zeta_j\in\T$.
Choose $\eta\in\T$ with $\eta^n=\alpha$ and set $c=c_0^{-1}$.
The polynomial $p(z)=cP(\eta z)$ is monic and has constant coefficient $1$.
For a unit root $\zeta$,
\[
 1-\overline\zeta z=-\overline\zeta(z-\zeta).
\]
Multiplying over the roots, with multiplicity, gives
$p^*=\overline{p(0)}p=p$. Rotation invariance and scalar homogeneity give
\eqref{eq:normalization-norms}, including both endpoints.

Since $z\mapsto z^n$ preserves $m$, one has $\norm{Q_n}_r=\norm{1+z}_r$.
The substitution $z=\e^{i\theta}$ and the beta integral give
\eqref{eq:model-norm}. Jensen's formula gives $\norm{Q_n}_0=1$, and
$\max_{\T}|1+z^n|=2$.
\end{proof}

For the rest of Sections~\ref{sec:reduction}--\ref{sec:large}, $p$ denotes a polynomial satisfying \eqref{eq:normalized-p}. Until Section~\ref{sec:completion}, we also assume that the zeros of $p$ are simple. The following polar factorization is the point at which this hypothesis enters; the same idea also appears in \cite{LZ26}.

\begin{prop}\label{prop:polar}
Let $p$ satisfy \eqref{eq:normalized-p} and have simple zeros on $\T$. Define
\begin{equation}\label{eq:qB-definition}
  q:=p-\frac1n\Dop p,
  \qquad
  B:=\frac{q^*}{q},
\end{equation}
where $\Dop$ and the reflection are given by \eqref{eq:euler-operator} and \eqref{eq:reflection}. Then $q$ has no zeros on $\overline\D$, $B$ is a finite Blaschke product with $B(0)=0$, and
\begin{equation}\label{eq:polar-factorization}
  q^*=\frac1n\Dop p,
  \qquad
  p=q+q^*=q(1+B).
\end{equation}
Moreover, $B$ preserves normalized Haar measure:
\begin{equation}\label{eq:B-preserves-m}
  \int_{\T}\varphi(B(z))\,\dd m(z)
  =\int_{\T}\varphi(\zeta)\,\dd m(\zeta)
\end{equation}
for every $\varphi\in L^1(\T,m)$.
\end{prop}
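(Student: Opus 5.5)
The plan is to verify the algebraic identities first, then locate the zeros of $q$ through a Gauss--Lucas/Eneström-type argument, and finally obtain measure preservation from the Blaschke structure.

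\emph{Algebra.} Write $p(z)=\sum_{k=0}^n a_kz^k$ with $a_0=a_n=1$. Self-inversiveness $p=p^*$ means $a_{n-k}=\overline{a_k}$. Then
\[
q=\sum_{k}(1-k/n)a_kz^k,\qquad \tfrac1n\Dop p=\sum_k (k/n)a_kz^k .
\]
Taking the reflection of $q$ gives
\[
q^*=\sum_k(1-k/n)\overline{a_k}z^{n-k}=\sum_j (j/n)\overline{a_{n-j}}z^j=\sum_j(j/n)a_jz^j=\tfrac1n\Dop p.
\]
Hence $q+q^*=p$, and wherever $q\ne0$ we have $p=q(1+B)$. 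Also $q(0)=a_0=1$ and $q^*(0)=0$, so $B(0)=0$.

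\emph{Zeros of $q$.} Write $\zeta_1,\dots,\zeta_n\in\T$ for the simple zeros of $p$. Then
\[
\frac{\Dop p}{np}(z)=\frac1n\sum_j\frac{z}{z-\zeta_j},
\]
and each term $z/(z-\zeta_j)$ has real part $<1/2$ for $|z|<1$. To see this, $w=z/(z-\zeta)$ maps $\D$ to the half-plane $\Rea w<1/2$; this is a Möbius computation, using that $|z|=|z-\zeta|$ is the bisector. Averaging, $\Rea\frac{\Dop p}{np}<\tfrac12$ on $\D$ off the zeros of $p$. Now $q=p\,(1-\Dop p/(np))$, and this quantity has real part $>1/2>0$, so $q\ne0$ on $\D$ minus the zero set of $p$, which lies on $\T$ anyway.

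On $\T$ the real part equals exactly $1/2$, so at points of $\T$ that are not roots, $q=p(1/2-i\,\text{real})\neq0$. At a root $\zeta_j$, $q(\zeta_j)=-\zeta_jp'(\zeta_j)/n\ne0$ because the zero is simple. This is the step where simplicity is used, and I expect it to be the only delicate point. So $q$ has no zeros on $\overline\D$.

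\emph{Blaschke structure.} On $\T$ we have $|q^*|=|z^n\overline{q(z)}|=|q|$, so $B=q^*/q$ is analytic on a neighborhood of $\overline\D$, unimodular on $\T$, and rational. It is therefore a finite Blaschke product, with zeros those of $q^*$ in $\D$, which are reflections of the zeros of $q$. Since $B(0)=0$, it is nonconstant. Note $\deg q\le n-1$, so $q^*$ vanishes at $0$, consistent with this.

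\emph{Measure preservation.} For $\varphi(\zeta)=\zeta^k$ with $k\ge1$, $B^k$ is analytic with $B^k(0)=0$, so $\int_\T B^k\,\dd m=0$ by the mean value property. Conjugation handles $k<0$, and $k=0$ is trivial. Thus the pushforward $B_*m$ has the same Fourier coefficients as $m$, so $B_*m=m$. Then \eqref{eq:B-preserves-m} holds for continuous $\varphi$ by density of trigonometric polynomials, and extends to $L^1$ by the identity of measures.
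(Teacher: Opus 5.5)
Your proof is correct and follows the paper's argument essentially step for step. You get the identities by coefficient comparison, zero exclusion from the half-plane bound $\Rea\frac{z}{z-\zeta_j}\le\frac12$ on $\overline\D$ together with simplicity at the roots, and $B_*m=m$ from vanishing Fourier coefficients; the only cosmetic differences are that the paper derives the half-plane bound from the identity $2\Rea\frac{z}{z-\zeta_j}=1+\frac{|z|^2-1}{|z-\zeta_j|^2}$ rather than your M\"obius picture, and writes out the factorization of $q^*/q$ explicitly where you invoke the standard characterization of finite Blaschke products.
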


\begin{proof}
Comparing coefficients in $p=p^*$ gives $q^*=zp'/n$ and $p=q+q^*$.
To prove zero exclusion, write the roots of $p$ as $\zeta_1,\ldots,\zeta_n$.
For $|z|\le1$ and $z\ne\zeta_j$,
\[
 2\Rea\frac{z}{z-\zeta_j}
 =1+\frac{|z|^2-1}{|z-\zeta_j|^2}\le1.
\]
If $p(z)\ne0$, its logarithmic derivative therefore gives
\[
 \Rea\frac{q(z)}{p(z)}
 =1-\frac1n\sum_{j=1}^n\Rea\frac{z}{z-\zeta_j}\ge\frac12.
\]
If $p(z)=0$, simplicity and $|z|=1$ give $q(z)=-zp'(z)/n\ne0$.
Thus $q$ is zero-free on $\overline\D$.

Let $d=\deg q<n$ and let $u_1,\ldots,u_d$ be its roots. Since $|u_j|>1$,
factoring $q^*/q$ gives
\[
 B(z)=\lambda z^{n-d}\prod_{j=1}^d
 \frac{z-\alpha_j}{1-\overline{\alpha_j}z},
 \qquad \alpha_j=\frac1{\overline{u_j}},\quad |\lambda|=1.
\]
Hence $B$ is a finite Blaschke product and $B(0)=0$.
In particular, $|B|=1$ on $\T$, and Schwarz's lemma gives $|B(z)|\le|z|$ in $\D$.

For every $k\ge1$, the mean-value property gives $\int_{\T}B^k\,\dd m=0$.
Conjugation gives the same identity for negative powers. Thus $B_*m$ and
$m$ have identical Fourier coefficients and are equal, proving
\eqref{eq:B-preserves-m}; see also \cite[Lemma~1]{Nor68}.
\end{proof}

\subsection{Differentiation in the exponent}

For a nonzero polynomial $f$ and $r>0$, define its $r$-moment and the associated probability measure by
\begin{equation}\label{eq:moment-density}
  M_f(r):=\int_{\T}|f|^r\,\dd m,
  \qquad
  \dd\rho_{f,r}:=\frac{|f|^r}{M_f(r)}\,\dd m.
\end{equation}
For a probability measure $\rho\ll m$, its relative entropy with respect to $m$ is
\begin{equation}\label{eq:relative-entropy}
  \mathsf D(\rho\Vert m)
  :=\int_{\T}\log\left(\frac{\dd\rho}{\dd m}\right)\,\dd\rho,
\end{equation}
with the convention $0\log0=0$. The next identity shows that differentiation of an $L^r$-mean produces this entropy.

\begin{lem}\label{lem:entropy-derivative}
Let $f$ be a nonzero polynomial, and let $M_f(r)$ and $\rho_{f,r}$ be defined by \eqref{eq:moment-density}. Then, for every $r>0$,
\begin{equation}\label{eq:entropy-derivative}
  r^2\frac{\dd}{\dd r}\log\norm{f}_r
  =r\frac{M_f'(r)}{M_f(r)}-\log M_f(r)
  =\mathsf D(\rho_{f,r}\Vert m).
\end{equation}
In particular, let $p$ be the normalized polynomial in \eqref{eq:normalized-p}, and set $Q_n(z):=1+z^n$. If
\begin{equation}\label{eq:entropy-comparison}
  \mathsf D(\rho_{p,r}\Vert m)
  \ge
  \mathsf D(\rho_{Q_n,r}\Vert m)
\end{equation}
holds for every $r>0$, then
$r\mapsto\log(\norm{p}_r/\norm{Q_n}_r)$ is nondecreasing on $(0,\infty)$.
\end{lem}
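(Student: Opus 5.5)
The plan is to compute $\log\norm{f}_r=\frac1r\log M_f(r)$ and differentiate it directly, then read off the monotonicity statement. First I justify that $M_f$ is differentiable on $(0,\infty)$ with
\[
 M_f'(r)=\int_{\T}|f|^r\log|f|\,\dd m .
\]
Since $f$ is a nonzero polynomial, it has finitely many zeros on $\T$, so $\log|f|\in L^1(m)$. For $r$ in a compact subinterval $[r_0,r_1]\subset(0,\infty)$, the difference quotients are dominated by
\[
 |f|^{r}\,\bigl|\log|f|\bigr|\le C\bigl(1+|f|^{r_0/2}\bigl|\log|f|\bigr|\bigr),
\]
and the right side is bounded on $\T$ because $x^{\epsilon}|\log x|$ is bounded near $0$ and $|f|$ is bounded. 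Dominated convergence (through the mean value theorem) then gives the formula.

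With this in hand, the chain rule gives
\[
 \frac{\dd}{\dd r}\Bigl(\frac{\log M_f(r)}{r}\Bigr)=\frac{M_f'(r)}{rM_f(r)}-\frac{\log M_f(r)}{r^2}.
\]
Multiplying by $r^2$ yields the first equality in \eqref{eq:entropy-derivative}.

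For the second equality, write $\frac{\dd\rho_{f,r}}{\dd m}=|f|^r/M_f(r)$. Then
\[
 \mathsf D(\rho_{f,r}\Vert m)=\int_{\T}\bigl(r\log|f|-\log M_f(r)\bigr)\frac{|f|^r}{M_f(r)}\,\dd m
 = r\frac{M_f'(r)}{M_f(r)}-\log M_f(r),
\]
using $\int\dd\rho_{f,r}=1$. The integrand is integrable by the bound above, and the convention $0\log0=0$ takes care of the finitely many zeros of $f$, which form an $m$-null set anyway.

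For the consequence, apply \eqref{eq:entropy-derivative} to both $f=p$ and $f=Q_n$ and subtract:
\[
 r^2\frac{\dd}{\dd r}\log\frac{\norm{p}_r}{\norm{Q_n}_r}=\mathsf D(\rho_{p,r}\Vert m)-\mathsf D(\rho_{Q_n,r}\Vert m)\ge0
\]
for every $r>0$, by \eqref{eq:entropy-comparison}. The function $r\mapsto\log(\norm{p}_r/\norm{Q_n}_r)$ is differentiable on $(0,\infty)$ with nonnegative derivative, so it is nondecreasing by the mean value theorem.

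The only step needing care is the differentiation under the integral sign near the zeros of $f$ on $\T$, which the domination above handles. Everything else is algebra.
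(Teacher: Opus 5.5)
Your proposal is correct and follows the paper's proof essentially step for step. Both arguments justify $M_f'(r)=\int_{\T}|f|^r\log|f|\,\dd m$ by a uniform bound on $x^u\log x$ for $u$ in a compact subinterval of $(0,\infty)$, differentiate $r^{-1}\log M_f(r)$, substitute the density $|f|^r/M_f(r)$, and subtract the identities for $p$ and $Q_n$.
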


\begin{proof}
Fix $r_0>0$ and let $K=[r_0/2,3r_0/2]$. On
$K\times[0,\norm{f}_\infty]$, the function $x^u\log x$, assigned the value
$0$ at $x=0$, is continuous and bounded. It therefore supplies a uniform
integrable bound for differentiation in $u$. Consequently,
\[
 M_f'(r)=\int_{\T}|f|^r\log|f|\,\dd m.
\]
Differentiating $\log\norm{f}_r=r^{-1}\log M_f(r)$ gives the first equality
in \eqref{eq:entropy-derivative}; substitution of the density
$|f|^r/M_f(r)$ gives the second. Subtracting the formulas for $p$ and $Q_n$
yields
\[
 \frac{\dd}{\dd r}\log\frac{\norm{p}_r}{\norm{Q_n}_r}
 =\frac{\mathsf D(\rho_{p,r}\Vert m)-\mathsf D(\rho_{Q_n,r}\Vert m)}{r^2},
\]
which proves the final assertion.
\end{proof}

\subsection{The centered polar term}

Let $p,q,B$ be as in Proposition~\ref{prop:polar}. For $r>0$, define
\begin{equation}\label{eq:model-pullback-measure}
  \dd\mu_r
  :=\frac{|1+B|^r}{M_{Q_n}(r)}\,\dd m,
  \qquad
  \ell_r:=\int_{\T}\log|1+B|\,\dd\mu_r.
\end{equation}
Proposition~\ref{prop:polar} implies that $\mu_r$ is a probability measure and that $\ell_r$ depends only on $r$, not on $p$. We also introduce the centered polar functional
\begin{equation}\label{eq:polar-functional}
  \mathcal P_r(q,B)
  :=\int_{\T}|q|^r|1+B|^r
  \bigl(\log|1+B|-\ell_r\bigr)\,\dd m.
\end{equation}
For a probability measure $\mu$ and a nonnegative integrable function $w$, define
\begin{equation}\label{eq:entropy-functional}
  \Ent_\mu(w)
  :=\int w\log w\,\dd\mu
  -\left(\int w\,\dd\mu\right)
   \log\left(\int w\,\dd\mu\right).
\end{equation}

The next decomposition separates an automatically nonnegative entropy term from the only term whose sign remains to be proved.

\begin{prop}\label{prop:entropy-reduction}
Let $p,q,B$ be as in Proposition~\ref{prop:polar}. For every $r>0$,
\begin{equation}\label{eq:entropy-splitting}
  \mathsf D(\rho_{p,r}\Vert m)
  -\mathsf D(\rho_{Q_n,r}\Vert m)
  =\frac{M_{Q_n}(r)}{M_p(r)}
    \Ent_{\mu_r}(|q|^r) 
+\frac{r}{M_p(r)}\,\mathcal P_r(q,B).
\end{equation}
In particular, the entropy comparison \eqref{eq:entropy-comparison} follows once
\begin{equation}\label{eq:polar-target}
  \mathcal P_r(q,B)\ge0
\end{equation}
has been proved.
\end{prop}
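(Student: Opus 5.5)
The plan is to verify \eqref{eq:entropy-splitting} by direct computation from the first equality in \eqref{eq:entropy-derivative}, applied to both $f=p$ and $f=Q_n$. Throughout I abbreviate $w:=|q|^r$, $\mu:=\mu_r$ and $W:=\int_{\T}w\,\dd\mu$.

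\textbf{Step 1: integrability.} First I note that all quantities are finite.
\begin{itemize}
\item By Proposition~\ref{prop:polar}, $q$ is zero-free on $\overline\D$, so $w$ is continuous and bounded above and below by positive constants on $\T$. Hence $w\log w$ is bounded and $W>0$.
\item The only singularity is $\log|1+B|$ where $B=-1$. It always appears multiplied by $|1+B|^r$, and $x^r\log x$ is bounded on $[0,2]$, so every integrand is bounded.
\end{itemize}

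\textbf{Step 2: the model terms.} Using \eqref{eq:B-preserves-m} with $\varphi(\zeta)=|1+\zeta|^r$ and with $\varphi(\zeta)=|1+\zeta|^r\log|1+\zeta|$, together with $\norm{Q_n}_r=\norm{1+z}_r$, I get
\[
M_{Q_n}(r)=\int_{\T}|1+B|^r\,\dd m,
\qquad
M_{Q_n}'(r)=\int_{\T}|1+B|^r\log|1+B|\,\dd m=M_{Q_n}(r)\,\ell_r .
\]
Therefore $\mathsf D(\rho_{Q_n,r}\Vert m)=r\ell_r-\log M_{Q_n}(r)$. This also confirms that $\mu$ is a probability measure.

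\textbf{Step 3: the terms for $p$.} By \eqref{eq:polar-factorization}, $p=q(1+B)$, so $|p|^r=w|1+B|^r$ and $\log|p|=\tfrac1r\log w+\log|1+B|$. Consequently $M_p(r)=M_{Q_n}(r)\,W$, and
\[
rM_p'(r)=M_{Q_n}(r)\int w\log w\,\dd\mu + rM_{Q_n}(r)\int w\log|1+B|\,\dd\mu .
\]
Dividing by $M_p(r)$ and subtracting $\log M_p(r)=\log M_{Q_n}(r)+\log W$ gives
\[
\mathsf D(\rho_{p,r}\Vert m)=\frac{1}{W}\int w\log w\,\dd\mu+\frac{r}{W}\int w\log|1+B|\,\dd\mu-\log W-\log M_{Q_n}(r).
\]

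\textbf{Step 4: subtract.} Subtracting the formula from Step 2, the $\log M_{Q_n}(r)$ terms cancel. Writing $r\ell_r=\tfrac{r}{W}\,\ell_r W$, the difference becomes
\[
\frac{1}{W}\Bigl(\int w\log w\,\dd\mu-W\log W\Bigr)
+\frac{r}{W}\int w\bigl(\log|1+B|-\ell_r\bigr)\,\dd\mu .
\]
The first bracket is $\Ent_\mu(w)$, and $1/W=M_{Q_n}(r)/M_p(r)$. In the second term I rewrite $\dd\mu=|1+B|^r\,\dd m/M_{Q_n}(r)$, which turns it into $\tfrac{r}{M_p(r)}\mathcal P_r(q,B)$. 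This is exactly \eqref{eq:entropy-splitting}.

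\textbf{Step 5: the final assertion.} Since $x\mapsto x\log x$ is convex and $\mu$ is a probability measure, Jensen's inequality gives $\Ent_\mu(w)\ge0$. Hence \eqref{eq:polar-target} implies \eqref{eq:entropy-comparison}.

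There is no real obstacle here. The only points needing care are:
\begin{itemize}
\item the use of $B_*m=m$ to identify the $Q_n$ quantities (Step 2);
\item the boundedness of $x^r\log x$ near $0$, which justifies all integrals (Step 1).
\end{itemize}
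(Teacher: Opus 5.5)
Your proposal is correct and is essentially the paper's argument. The paper subtracts the two entropies written directly through the densities $h=|1+B|^r/M_{Q_n}(r)$ and $wh/\Lambda$ and kills the constant in $\log h$ using $\int(w/\Lambda-1)\,\dd\mu_r=0$. Your route through $\mathsf D(\rho_{f,r}\Vert m)=rM_f'(r)/M_f(r)-\log M_f(r)$ is the same computation written in terms of moments, with the centering by $\ell_r$, the use of $B_*m=m$, and Jensen's inequality for $x\log x$ entering in the same way.
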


\begin{proof}
Put $h=|1+B|^r/M_{Q_n}(r)$, $w=|q|^r$, and
$\Lambda=\int w\,\dd\mu_r=M_p(r)/M_{Q_n}(r)$. The densities of $\mu_r$ and
$\rho_{p,r}$ with respect to $m$ are $h$ and $wh/\Lambda$, respectively.
Since both $B$ and $z\mapsto z^n$ preserve $m$,
\[
 \mathsf D(\rho_{Q_n,r}\Vert m)=\int_{\T}h\log h\,\dd m.
\]
All integrands are integrable: $q$ is bounded above and away from zero,
and $x^r\log x$ extends continuously at zero. Direct subtraction gives
\begin{align*}
 \mathsf D(\rho_{p,r}\Vert m)-\mathsf D(\rho_{Q_n,r}\Vert m)
 &=\frac1\Lambda\int w\log\frac w\Lambda\,\dd\mu_r
   +\int\left(\frac w\Lambda-1\right)\log h\,\dd\mu_r\\
 &=\frac1\Lambda\Ent_{\mu_r}(w)
   +\frac r\Lambda\int w(\log|1+B|-\ell_r)\,\dd\mu_r.
\end{align*}
In the second equality, the constant term in $\log h$ disappears because
$\int(w/\Lambda-1)\,\dd\mu_r=0$; centering uses
$\int\log|1+B|\,\dd\mu_r=\ell_r$. This is \eqref{eq:entropy-splitting}.
Finally, Jensen's inequality for $x\log x$ gives $\Ent_{\mu_r}(w)\ge0$.
\end{proof}

The remainder of the proof is therefore devoted to \eqref{eq:polar-target}.
\section{A boundary-to-area identity}\label{sec:area}

Fix $a>0$ and write $r=2a$. For the polar factor $q$ of
Proposition~\ref{prop:polar}, list its roots as $u_1,\ldots,u_d$.
Since $q(0)=1$ and $|u_j|>1$,
\[
 q(z)=\prod_{j=1}^d(1-z/u_j),\qquad
 L(z)=\sum_{j=1}^d\operatorname{Log}(1-z/u_j).
\]
Each logarithm is analytic near $\overline\D$, because
$\Rea(1-z/u_j)>0$ there. Thus $L(0)=0$, $\e^L=q$, and we may define
\begin{equation}\label{eq:g-definition}
 g=q^a:=\e^{aL},\qquad g'=agq'/q,\qquad |g|^2=|q|^{2a}.
\end{equation}
We use normalized area measure
\begin{equation}\label{eq:area-measure}
 \dd A(z):=\frac{\dd x\,\dd y}{\pi},\qquad z=x+iy.
\end{equation}
The goal of this section is to express $\mathcal P_{2a}(q,B)$ as a
weighted Dirichlet integral.

\subsection{The model Fourier coefficients}

Since $z\mapsto z^n$ preserves the normalized Haar measure $m$,
$M_{Q_n}(2a)=\int_{\T}|1+\zeta|^{2a}\,\dd m(\zeta)$. Define the probability density
\begin{equation}\label{eq:model-density}
  h_a(\zeta)
  :=\frac{|1+\zeta|^{2a}}{M_{Q_n}(2a)},
  \qquad \zeta\in\T,
\end{equation}
and its Fourier coefficients
\begin{equation}\label{eq:alpha-definition}
  \alpha_k(a)
  :=\int_{\T}h_a(\zeta)\zeta^{-k}\,\dd m(\zeta),
  \qquad k\in\mathbb Z.
\end{equation}
A dot will always mean differentiation with respect to $a$; thus
$\dot\alpha_k(a)=\partial_a\alpha_k(a)$. The relevant Fourier coefficients are explicit and have a summable derivative.

\begin{lem}\label{lem:model-coefficients}
For $a>0$, the Fourier coefficients in \eqref{eq:alpha-definition} satisfy
$\alpha_0(a)=1$ and $\alpha_{-k}(a)=\alpha_k(a)\in\R$. For $k\ge1$,
\begin{equation}\label{eq:alpha-formula}
 \alpha_k(a)=\prod_{j=0}^{k-1}\frac{a-j}{a+j+1}
 =\frac{\Gamma(a+1)^2}{\Gamma(a-k+1)\Gamma(a+k+1)},
\end{equation}
where the reciprocal gamma function gives the interpretation at integer $a$.
Moreover,
\begin{equation}\label{eq:alpha-summable}
 \sum_{k\ge1}\bigl(|\alpha_k(a)|+|\dot\alpha_k(a)|\bigr)<\infty.
\end{equation}
\end{lem}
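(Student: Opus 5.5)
We compute the coefficients directly on the circle. On $\T$ write $\zeta=\e^{i\theta}$. Then $|1+\zeta|^{2a}=(1+\zeta)^a(1+\bar\zeta)^a$, using principal branches. Both factors are boundary values of functions analytic in $\D$ and continuous on $\overline\D$ minus the point $-1$, where they are integrable. Expanding each factor binomially gives
\[
 (1+\zeta)^a(1+\bar\zeta)^a=\sum_{j,l\ge0}\binom aj\binom al\zeta^{j-l}.
\]
For $a>0$ the binomial coefficients satisfy $\binom aj=O(j^{-a-1})$, so both series converge absolutely and uniformly on $\T$. The product can therefore be integrated term by term, and
\[
 \int_\T|1+\zeta|^{2a}\zeta^{-k}\,\dd m=\sum_{l\ge0}\binom a{l+k}\binom al,\qquad k\ge0.
\]
By the Chu--Vandermonde (Gauss) summation this equals $\binom{2a}{a+k}$, that is, $\Gamma(2a+1)/(\Gamma(a+k+1)\Gamma(a-k+1))$. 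The case $k=0$ gives $M_{Q_n}(2a)=\Gamma(2a+1)/\Gamma(a+1)^2$. Dividing yields the gamma form of \eqref{eq:alpha-formula}. The product form follows from $\Gamma(a+1)/\Gamma(a-k+1)=\prod_{j<k}(a-j)$ and $\Gamma(a+k+1)/\Gamma(a+1)=\prod_{j<k}(a+j+1)$.

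For an independent route that avoids the Gauss sum, use the classical integral
\[
 \int_0^{\pi}\cos^{2a}(\phi)\cos(2k\phi)\,\dd\phi=\frac{\pi\,\Gamma(2a+1)}{2^{2a}\Gamma(a+k+1)\Gamma(a-k+1)},
\]
after substituting $\theta=2\phi$ in $|1+\e^{i\theta}|^{2a}=(2\cos(\theta/2))^{2a}$. The symmetries $\alpha_0=1$ and $\alpha_{-k}=\alpha_k\in\R$ are immediate: $h_a$ is a probability density, it is real, and it is even in $\theta$. At integer $a$ and $k>a$, the reciprocal gamma vanishes, and so does the product, since it contains the factor $a-a$.

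For \eqref{eq:alpha-summable}, I would derive asymptotics from the gamma form. For $k>a+1$, the reflection formula gives
\[
 \frac1{\Gamma(a-k+1)}=\frac{\sin(\pi(a-k+1))}{\pi}\Gamma(k-a),
\]
so that
\[
 \alpha_k(a)=\frac{\Gamma(a+1)^2\sin(\pi(a-k+1))}{\pi}\cdot\frac{\Gamma(k-a)}{\Gamma(k+a+1)}.
\]
By Stirling, $\Gamma(k-a)/\Gamma(k+a+1)\sim k^{-2a-1}$, which is summable since $a>0$. For the derivative, take the logarithmic derivative of the smooth nonsign factor:
\[
 \partial_a\log\frac{\Gamma(k-a)}{\Gamma(k+a+1)}=-\psi(k-a)-\psi(k+a+1)=O(\log k).
\]
The derivative of the sine factor and of $\Gamma(a+1)^2$ is bounded. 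Hence $|\dot\alpha_k(a)|=O(k^{-2a-1}\log k)$, which is also summable. Finitely many small $k$ contribute finitely, and all quantities are smooth in $a$ there because $1/\Gamma$ is entire.

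The main obstacle is making this derivative bound rigorous across integer values of $a$, where the sine vanishes. There the product representation is more convenient: $\dot\alpha_k$ is a finite sum of smooth terms. The reflection-formula expression is itself an entire function of $a$, valid for all $k>a+1$, so no separate treatment is needed, provided the bound is stated locally uniformly in $a$. Only pointwise summability at each fixed $a$ is claimed, so this suffices.
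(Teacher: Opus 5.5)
Your argument is correct, but it takes a different route from the paper's at both stages.

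\emph{The closed form.} The paper never evaluates the moments. It integrates $F_k'$ for $F_k(t)=\cos^{2a+1}t\sin((2k+1)t)$ over $[-\pi/2,\pi/2]$ to obtain the one-step recurrence $(a+k+1)I_{k+1}=(a-k)I_k$. Since $\alpha_k=I_k/I_0$, the product formula follows by induction with no special-function input. You instead expand $(1+\zeta)^a(1+\bar\zeta)^a$ and sum $\sum_{l}\binom a{l+k}\binom al=\binom ak\,{}_2F_1(-a,k-a;k+1;1)$. Two small corrections:
\begin{itemize}
\item For non-integer $a$ this series does not terminate. What you are really invoking is Gauss's summation theorem, which applies because $\gamma-\alpha-\beta=2a+1>0$, not the finite Chu--Vandermonde identity.
\item In your alternative integral, $\cos^{2a}\phi$ must be read as $|\cos\phi|^{2a}$ on $[0,\pi]$.
\end{itemize}
Your route does give the normalization $M_{Q_n}(2a)=\Gamma(2a+1)/\Gamma(a+1)^2$ for free, as the case $k=0$.

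\emph{Summability.} The paper differentiates the recurrence and uses the Raabe-type telescoping inequalities $aA_k\le kA_k-(k+1)A_{k+1}$ and $aD_k\le kD_k-(k+1)D_{k+1}+2A_k$ for $k\ge N\ge a$. This is fully elementary and covers integer $a$ without comment. Your reflection-formula and Stirling/digamma argument is shorter once those classical facts are granted, and it yields the sharper decay rate $|\dot\alpha_k(a)|=O(k^{-2a-1}\log k)$.

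\emph{Integer $a$.} Your worry here is unnecessary. The reflection identity for $\alpha_k(a')$ holds on the whole interval $0<a'<k$, each factor is smooth there, and no division by the sine occurs. So the product rule applies at integer $a$ exactly as elsewhere. Calling the displayed expression ``an entire function of $a$'' is also inaccurate, since $\Gamma(k-a)$ has poles at $a=k,k+1,\ldots$. Only the combination $\sin(\pi(a-k+1))\Gamma(k-a)/\pi=1/\Gamma(a-k+1)$ is entire.
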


\begin{proof}
The density $h_a(\e^{i\theta})$ is real and even, which gives the symmetry.
With
\[
 I_k(a)=\int_{-\pi/2}^{\pi/2}\cos^{2a}t\cos(2kt)\,\dd t,
\]
the substitution $\theta=2t$ gives $\alpha_k=I_k/I_0$. The function
$F_k(t)=\cos^{2a+1}t\sin((2k+1)t)$ vanishes at both endpoints and satisfies
\[
 F_k'(t)=(k-a)\cos^{2a}t\cos(2kt)
 +(a+k+1)\cos^{2a}t\cos(2(k+1)t).
\]
Integrating gives $(a+k+1)I_{k+1}=(a-k)I_k$. Since $I_0>0$, induction
proves the product formula; the gamma form follows from $\Gamma(x+1)=x\Gamma(x)$.

For summability, write $A_k=|\alpha_k(a)|$, $D_k=|\dot\alpha_k(a)|$, and
choose an integer $N\ge a$. Differentiating the finite-product recurrence gives,
for $k\ge N$,
\[
 A_{k+1}=\frac{k-a}{k+a+1}A_k,\qquad
 D_{k+1}\le\frac{k-a}{k+a+1}D_k+
 \frac{2k+1}{(k+a+1)^2}A_k.
\]
The elementary bounds
\[
 k-(k+1)\frac{k-a}{k+a+1}\ge a,\qquad
 \frac{(k+1)(2k+1)}{(k+a+1)^2}\le2
\]
imply
\[
 aA_k\le kA_k-(k+1)A_{k+1},\qquad
 aD_k\le kD_k-(k+1)D_{k+1}+2A_k.
\]
Summing from $N$ to $N+m-1$ and discarding nonnegative final terms gives
\[
 \sum_{j=0}^{m-1}A_{N+j}\le\frac{NA_N}{a},\qquad
 \sum_{j=0}^{m-1}D_{N+j}\le\frac{ND_N+2NA_N/a}{a}.
\]
The nonnegative partial sums are bounded, proving \eqref{eq:alpha-summable}.
This argument includes integer $a$, even when a coefficient vanishes.
\end{proof}

\subsection{Fourier reduction}

For $k\ge0$, define the boundary correlations
\begin{equation}\label{eq:correlations}
  c_k:=\int_{\T}B^k|g|^2\,\dd m
  =\langle B^k g,g\rangle_{H^2},
\end{equation}
where the $H^2$ inner product is linear in the first variable. Differentiating the model density converts the polar functional into these correlations.

\begin{prop}\label{prop:fourier-reduction}
For every $a>0$, the correlations in \eqref{eq:correlations} satisfy
\begin{equation}\label{eq:fourier-reduction}
  \mathcal P_{2a}(q,B)
  =M_{Q_n}(2a)
   \sum_{k\ge1}\dot\alpha_k(a)\Rea c_k.
\end{equation}
\end{prop}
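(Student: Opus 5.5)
We aim to write the centered logarithmic weight $|1+B|^{2a}(\log|1+B|-\ell_{2a})$ as an $a$-derivative of the normalized model density, and then expand that density in Fourier series in the variable $B$. Since $|1+\zeta|^{2a}=M_{Q_n}(2a)h_a(\zeta)$, differentiating in $a$ gives
\[
 \partial_a\bigl(M_{Q_n}(2a)h_a(\zeta)\bigr)=2|1+\zeta|^{2a}\log|1+\zeta|.
\]
Using $\partial_a M_{Q_n}(2a)=2\int_{\T}|1+\zeta|^{2a}\log|1+\zeta|\,\dd m=2M_{Q_n}(2a)\ell_{2a}$, we get
\[
 M_{Q_n}(2a)\,\dot h_a(\zeta)=2|1+\zeta|^{2a}\bigl(\log|1+\zeta|-\ell_{2a}\bigr).
\]
Here $\ell_{2a}$ is computed via the pushforward identity \eqref{eq:B-preserves-m}, which identifies $\ell_{2a}$ with the corresponding integral against $h_a\,\dd m$. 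Substituting $\zeta=B(z)$ gives
\[
 \mathcal P_{2a}(q,B)=\tfrac12 M_{Q_n}(2a)\int_{\T}|g|^2\,\dot h_a(B)\,\dd m .
\]

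Next we expand $\dot h_a$ in Fourier series. By Lemma~\ref{lem:model-coefficients}, $h_a(\zeta)=\sum_{k\in\mathbb Z}\alpha_k(a)\zeta^k$ with absolutely summable coefficients. The differentiated series $\sum_k\dot\alpha_k(a)\zeta^k$ converges absolutely and uniformly in $\zeta$, and locally uniformly in $a$, by \eqref{eq:alpha-summable}. It therefore equals $\dot h_a$ pointwise. To justify this identification, integrate the series in $a$ over a small interval and use dominated convergence, or equivalently note that the Fourier coefficients of $\dot h_a$ are $\dot\alpha_k$ by differentiating \eqref{eq:alpha-definition} under the integral, as in Lemma~\ref{lem:entropy-derivative}. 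In any case $\dot\alpha_0=0$ because $\alpha_0\equiv1$. Using $\dot\alpha_{-k}=\dot\alpha_k\in\R$ we obtain
\[
 \dot h_a(\zeta)=\sum_{k\ge1}\dot\alpha_k(a)(\zeta^k+\bar\zeta^k)=2\sum_{k\ge1}\dot\alpha_k(a)\Rea\zeta^k
\]
on $\T$.

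Finally we substitute $\zeta=B(z)$ with $|B|=1$ on $\T$ and integrate against the bounded weight $|g|^2=|q|^{2a}$. Uniform convergence allows termwise integration, giving
\[
 \int_{\T}|g|^2\dot h_a(B)\,\dd m=2\sum_{k\ge1}\dot\alpha_k(a)\Rea\int_{\T}B^k|g|^2\,\dd m=2\sum_{k\ge1}\dot\alpha_k(a)\Rea c_k ,
\]
where we use that $|g|^2$ is real so $\int\bar B^k|g|^2=\overline{c_k}$. Combined with the first step this yields \eqref{eq:fourier-reduction}.

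The only delicate point is the interchange of $\partial_a$ with the Fourier expansion and with the integral, especially near $\zeta=-1$, where $|1+\zeta|^{2a}\log|1+\zeta|$ is singular for small $a$. This is handled by working on the coefficient side via the summability \eqref{eq:alpha-summable}, so that no pointwise bound near $-1$ is needed beyond the integrability already used in Lemma~\ref{lem:entropy-derivative}.
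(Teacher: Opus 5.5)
Your proposal is correct and follows essentially the same route as the paper: you identify $M_{Q_n}(2a)\dot h_a(\zeta)$ with $2|1+\zeta|^{2a}(\log|1+\zeta|-\ell_{2a})$, obtain the Fourier coefficients $\dot\alpha_k(a)$ of $\dot h_a$ by differentiating under the integral, use \eqref{eq:alpha-summable} for absolute and uniform convergence, and integrate termwise after substituting $\zeta=B$. Of your two justifications for identifying the series with $\dot h_a$, rely on the coefficient-side one. Fourier uniqueness then gives equality, which continuity of $\dot h_a$ at $\zeta=-1$ (or $B_*m=m$ for an a.e.\ identity) carries through the substitution $\zeta=B$. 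The other route needs local uniformity in $a$, which is not part of what \eqref{eq:alpha-summable} states.
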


\begin{proof}
Differentiating the density gives
\[
 \dot h_a(\zeta)=2h_a(\zeta)(\log|1+\zeta|-\ell_{2a}).
\]
The compact domination used in Lemma~\ref{lem:entropy-derivative} applies
also after multiplication by a Fourier character. Hence the Fourier
coefficients of $\dot h_a$ are $\dot\alpha_k(a)$, with zero constant
coefficient. The function $\dot h_a$ is continuous on $\T$, including
$\zeta=-1$, because $x^{2a}\log x\to0$ at zero.
By \eqref{eq:alpha-summable}, the series
\[
 \dot h_a(\zeta)=\sum_{k\ge1}\dot\alpha_k(a)(\zeta^k+\zeta^{-k})
\]
converges absolutely and uniformly; Fourier uniqueness identifies its sum
with $\dot h_a$. Substituting $\zeta=B$, multiplying by
$M_{Q_n}(2a)|g|^2/2$, and integrating is therefore legitimate. Since
$\int B^{-k}|g|^2\,\dd m=\overline{c_k}$, the result is
\eqref{eq:fourier-reduction}.
\end{proof}

\subsection{The Dirichlet recurrence}

For $k\ge0$, define the area moments
\begin{equation}\label{eq:area-moments}
  V_k:=\int_{\D}B^k|g'|^2\,\dd A.
\end{equation}
For functions $u,v$ analytic near $\overline\D$, we use the polarized
Dirichlet identity
\begin{equation}\label{eq:polarized-dirichlet}
 \langle\Dop u,v\rangle_{H^2}=\int_{\D}u'\overline{v'}\,\dd A.
\end{equation}
Indeed, writing $u(z)=\sum_{j\ge0}u_jz^j$ and $v(z)=\sum_{j\ge0}v_jz^j$,
both sides equal $\sum_{j\ge1}j u_j\overline{v_j}$: normalized area gives
$\int_{\D}z^{j-1}\bar z^{k-1}\,\dd A=\delta_{jk}/j$.
The series and their derivatives converge uniformly, so these integrations
are justified. The same coefficient calculation shows that $\Dop$ is
symmetric for the $H^2$ inner product; see also \cite[Section~1.1]{EKMR14}.

\begin{lem}\label{lem:dirichlet-recurrence}
For every $a>0$ and integer $k\ge1$,
\begin{equation}\label{eq:dirichlet-recurrence}
  akn\,c_k
  =(a+k)V_{k-1}-(a-k)V_k.
\end{equation}
\end{lem}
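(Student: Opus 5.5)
The plan is to evaluate the single boundary pairing $X_k:=\langle B^kg,\Dop g\rangle_{H^2}$ in two independent ways, each producing a linear relation among $X_k$, $c_k$, $V_{k-1}$ and $V_k$, and then to eliminate $X_k$. Three tools are needed. The first is the polarized Dirichlet identity \eqref{eq:polarized-dirichlet}, together with the $H^2$-symmetry of $\Dop$. The second is its Stokes-type variant
\[
 \langle zF,v\rangle_{H^2}=\int_{\D}F\,\overline{v'}\,\dd A
 \qquad(F,v\text{ analytic near }\overline\D).
\]
This follows from the same coefficient computation, or from $\frac1{2\pi i}\oint F\bar v\,\dd z=\int_{\D}\partial_{\bar z}(F\bar v)\,\dd x\,\dd y/\pi$. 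Taking $F=B^kg'$ gives $\langle B^k\Dop g,g\rangle_{H^2}=V_k$. The third is the structural relation coming from Proposition~\ref{prop:polar}. Since $\Dop p=nq^*=nBq$ and $p=q(1+B)$, we have
\[
 \Dop B\cdot q=nBq-(1+B)\Dop q,
 \qquad\text{hence}\qquad
 \Dop B\cdot g=nBg-\tfrac1a(1+B)\Dop g,
\]
using $\Dop g=ag\,\Dop q/q$ from \eqref{eq:g-definition}.

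\emph{First relation (interior).} By symmetry of $\Dop$ and \eqref{eq:polarized-dirichlet},
\[
 X_k=\langle\Dop(B^kg),g\rangle=\int_{\D}(B^kg)'\overline{g'}\,\dd A
 =V_k+k\int_{\D}B^{k-1}B'g\,\overline{g'}\,\dd A.
\]
Substitute $B'g=nBg/z-\frac1a(1+B)g'$; the quotient $B^kg/z$ is analytic because $B(0)=0$. Then apply the Stokes identity with $F=B^kg/z$, which turns $\int_{\D}(B^kg/z)\overline{g'}\,\dd A$ into $\langle B^kg,g\rangle=c_k$. The result is
\[
 X_k=V_k+knc_k-\tfrac ka\,(V_{k-1}+V_k).
\]

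\emph{Second relation (boundary).} On $\T$ we have $|q|=|q^*|$ and $q^*=z^n\overline q$. Differentiating logarithmically gives $\Dop q^*/q^*=n-\overline{\Dop q/q}$. Combined with $\Dop q^*/q^*=\Dop B/B+\Dop q/q$, this yields the real boundary identity
\[
 2\Rea\frac{\Dop g}{g}=a\Bigl(n-\frac{\Dop B}{B}\Bigr)\quad\text{on }\T .
\]
Hence
\[
 V_k+X_k=\int_{\T}B^k|g|^2\,2\Rea\frac{\Dop g}{g}\,\dd m
 =anc_k-a\int_{\T}B^{k-1}\Dop B\,|g|^2\,\dd m .
\]
For the last integral, write it as $\frac1k\langle\Dop(B^k)g,g\rangle=\frac1k\bigl(\langle\Dop(B^kg),g\rangle-\langle B^k\Dop g,g\rangle\bigr)=\frac1k(X_k-V_k)$. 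This gives $(a+k)X_k=aknc_k+(a-k)V_k$.

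\emph{Elimination.} Inserting the first relation into the second, the $c_k$ coefficient becomes $(a+k)kn-akn=k^2n$. The $V_k$ coefficient becomes $k(a-k)/a$ and the $V_{k-1}$ coefficient becomes $-k(a+k)/a$. Dividing by $k/a$ gives exactly \eqref{eq:dirichlet-recurrence}.

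The main obstacle is the second relation: finding the boundary identity for $\Rea(\Dop g/g)$, which is where self-inversivity, i.e.\ $|B|=1$ and $q^*=Bq$, genuinely enters. Once that is in hand, the rest is bookkeeping. The analytic justifications are routine: $g$, $B$ and $B^kg/z$ are analytic near $\overline\D$ because $q$ is zero-free there, so all the coefficient and Stokes manipulations are legitimate.
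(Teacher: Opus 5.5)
Your argument is correct and is essentially the paper's proof. Both rest on the symmetry $\langle\Dop(B^kg),g\rangle_{H^2}=\langle B^kg,\Dop g\rangle_{H^2}$, the factorization identity $(1+B)\Dop q/q=nB-\Dop B$, the boundary relation $2\Rea(\Dop q/q)=n-\Dop B/B$ on $\T$, and the Dirichlet identity $V_k=\langle B^k\Dop g,g\rangle_{H^2}$, followed by elimination of one auxiliary pairing; the only difference is bookkeeping: the paper first merges the two structural identities into $\overline{\sigma}=\sigma/B$ on $\T$ (with $\sigma=\Dop q/q$) and eliminates $\int_{\T}B^{k-1}\Dop B\,|g|^2\,\dd m$, whereas you use them separately (the factorization identity in area form via your Stokes-type identity) and eliminate $\langle B^kg,\Dop g\rangle_{H^2}$.
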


\begin{proof}
Set
\[
  \sigma:=\frac{\Dop q}{q},
  \qquad
  I_k:=\int_{\T}B^k\sigma|g|^2\,\dd m
  \quad(k\ge0).
\]
From $p=q(1+B)$ and $\Dop p=nqB$, we obtain
\begin{equation}\label{eq:B-sigma-identity}
  (1+B)\sigma=nB-\Dop B.
\end{equation}
On $\T$, the identity $B=q^*/q$ gives
\begin{equation}\label{eq:sigma-boundary}
  \overline\sigma=\frac{\sigma}{B}.
\end{equation}
Indeed,
\[
  \frac{\Dop B}{B}=n-\sigma-\overline\sigma
\]
by differentiating $B=q^*/q$ on $\T$, whereas \eqref{eq:B-sigma-identity} gives
$\Dop B/B=n-\sigma-\sigma/B$.

For $k\ge1$, set
\[
  X_k:=\int_{\T}B^{k-1}\Dop B\,|g|^2\,\dd m.
\]
Multiplying \eqref{eq:B-sigma-identity} by $B^{k-1}|g|^2$ gives
\[
  I_{k-1}+I_k=nc_k-X_k.
\]
Since $\Dop g=a\sigma g$ and the Euler operator is symmetric for the $H^2$ inner product on these analytic functions,
\[
  kX_k+aI_k
  =\langle\Dop(B^k g),g\rangle_{H^2}
  =\langle B^k g,\Dop g\rangle_{H^2}
  =aI_{k-1},
\]
where the last equality uses \eqref{eq:sigma-boundary}. Eliminating $X_k$ yields
\begin{equation}\label{eq:I-recurrence}
  knc_k+(a-k)I_k-(a+k)I_{k-1}=0.
\end{equation}

Finally, choose an analytic primitive $u_k$ with
$u_k'=B^k g'$ and $u_k(0)=0$. Then \eqref{eq:polarized-dirichlet} and $\Dop g=a\sigma g$ give
\[
  V_k
  =\int_{\D}B^k|g'|^2\,\dd A
  =\langle\Dop u_k,g\rangle_{H^2}
  =\langle B^k\Dop g,g\rangle_{H^2}
  =aI_k.
\]
Substitution into \eqref{eq:I-recurrence} proves \eqref{eq:dirichlet-recurrence}.
\end{proof}

\subsection{The universal kernel}

Define, for $a>0$ and $z\in\D$,
\begin{equation}\label{eq:H-definition}
  H_a(z)
  :=\sum_{k\ge1}\frac{\dot\alpha_k(a)}{k}
  \left((a+k)z^{k-1}-(a-k)z^k\right).
\end{equation}
Lemma~\ref{lem:model-coefficients} implies absolute and uniform convergence on $\overline\D$. Thus $H_a$ is analytic on $\D$ and continuous on $\overline\D$. Combining the Fourier reduction with the Dirichlet recurrence gives the desired Hardy--Bergman identity.

\begin{prop}\label{prop:area-identity}
For every $a>0$,
\begin{equation}\label{eq:area-identity}
  \mathcal P_{2a}(q,B)
  =\frac{M_{Q_n}(2a)}{an}
   \int_{\D}\Rea H_a(B(z))\,|g'(z)|^2\,\dd A(z).
\end{equation}
\end{prop}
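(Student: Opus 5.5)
The plan is to combine Proposition~\ref{prop:fourier-reduction} with Lemma~\ref{lem:dirichlet-recurrence} termwise, and then to resum the series under the area integral. By Proposition~\ref{prop:fourier-reduction},
\[
 \mathcal P_{2a}(q,B)=M_{Q_n}(2a)\sum_{k\ge1}\dot\alpha_k(a)\Rea c_k .
\]
For each $k\ge1$, Lemma~\ref{lem:dirichlet-recurrence} gives
\[
 c_k=\frac{1}{akn}\bigl((a+k)V_{k-1}-(a-k)V_k\bigr).
\]
Since $a$, $k$ and $n$ are real, taking real parts commutes with this linear combination. Therefore
\[
 \mathcal P_{2a}(q,B)=\frac{M_{Q_n}(2a)}{an}\sum_{k\ge1}\frac{\dot\alpha_k(a)}{k}\,\Rea\bigl((a+k)V_{k-1}-(a-k)V_k\bigr).
\]

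Next I insert the definition \eqref{eq:area-moments}. This gives $V_j=\int_\D B^j|g'|^2\,\dd A$, so the $k$-th summand becomes
\[
 \int_\D \Rea\Bigl[\tfrac{\dot\alpha_k(a)}{k}\bigl((a+k)B^{k-1}-(a-k)B^k\bigr)\Bigr]|g'|^2\,\dd A .
\]
The remaining task is to interchange the sum over $k$ with the area integral. After the interchange, the bracketed sum is exactly $H_a(B(z))$ by \eqref{eq:H-definition}, and this yields \eqref{eq:area-identity}.

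The only analytic point is justifying this interchange. I would use dominated convergence, or Fubini for sums and integrals. Since $|B|\le1$ on $\D$ (Proposition~\ref{prop:polar}), the $k$-th term is bounded in absolute value by
\[
 \frac{|\dot\alpha_k(a)|}{k}\bigl(|a+k|+|a-k|\bigr)|g'|^2\le \frac{2(a+k)}{k}\,|\dot\alpha_k(a)|\,|g'|^2\le 2(a+1)|\dot\alpha_k(a)|\,|g'|^2 .
\]
By \eqref{eq:alpha-summable}, $\sum_k|\dot\alpha_k(a)|<\infty$. Moreover, $g'$ is bounded on $\overline\D$, because $g=\e^{aL}$ is analytic near $\overline\D$, so $|g'|^2\in L^1(\dd A)$. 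The series of absolute values is therefore integrable, and the interchange is legitimate. The same bound gives uniform convergence of the series defining $H_a$ on $\overline\D$, so $H_a\circ B$ is continuous and the resulting integral is well defined.

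I expect no genuine obstacle here, since the identity is bookkeeping once the two earlier results are in hand. The one step needing care is the uniform bound $(a+k)/k\le a+1$ together with the summability of $\dot\alpha_k$, which makes the termwise passage from the boundary correlations to the area integral rigorous.
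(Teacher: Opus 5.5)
Your proposal is correct and matches the paper's proof: you substitute Lemma~\ref{lem:dirichlet-recurrence} into Proposition~\ref{prop:fourier-reduction}, then resum under the area integral. You justify the resummation with $|B|\le1$ and the bound $\frac{|\dot\alpha_k(a)|}{k}\bigl((a+k)+|a-k|\bigr)\le 2(1+a)|\dot\alpha_k(a)|$, combined with \eqref{eq:alpha-summable}, exactly as the paper does.
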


\begin{proof}
Since $|B|\le1$ in $\D$, the moments in
\eqref{eq:area-moments} satisfy $|V_k|\le V_0$. Moreover,
\[
\sum_{k\ge1}\frac{|\dot\alpha_k(a)|}{k}
\bigl((a+k)+|a-k|\bigr)
\le
2(1+a)\sum_{k\ge1}|\dot\alpha_k(a)|
<\infty.
\]
Hence Lemma~\ref{lem:model-coefficients} permits termwise summation.
Multiplying \eqref{eq:dirichlet-recurrence} by
$\dot\alpha_k(a)/(akn)$, summing over $k\ge1$, and taking real parts
gives 
\[
  \sum_{k\ge1}\dot\alpha_k(a)\Rea c_k
  =\frac1{an}
   \int_{\D}\Rea H_a(B(z))\,|g'(z)|^2\,\dd A(z).
\]
Now apply Proposition~\ref{prop:fourier-reduction}.
\end{proof}

The proof of Theorem~\ref{thm:main} has now been reduced to the following universal positivity statement, which no longer involves $p$, $q$, $B$, or the degree $n$.

\begin{thm}\label{thm:kernel}
Let $H_a$ be the analytic function defined by \eqref{eq:H-definition}. Then
\begin{equation}\label{eq:kernel-positivity}
  \Rea H_a(z)>0,
  \qquad a>0,\quad z\in\D.
\end{equation}
\end{thm}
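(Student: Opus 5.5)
The plan follows the split announced in the proof strategy: a direct coefficient argument for $0<a\le1$, and an integral representation for $a\ge1$. Both cases start by rewriting $H_a$ in a usable form. Put $\beta_k:=\dot\alpha_k(a)/k$. Then
\[
 H_a(z)=\sum_{k\ge1}\beta_k(a+k)z^{k-1}-\sum_{k\ge1}\beta_k(a-k)z^k,
\]
so
\[
 H_a(0)=(a+1)\beta_1
 \quad\text{and}\quad
 [z^j]H_a=\beta_{j+1}(a+j+1)-\beta_j(a-j)\quad (j\ge1).
\]
Since $\alpha_1=a/(a+1)$, we have $\dot\alpha_1=1/(a+1)^2$, hence $H_a(0)=1/(a+1)>0$.

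\emph{Case $0<a\le1$.} The standard sufficient condition for positivity of a real part is
\[
 \sum_{j\ge1}|[z^j]H_a|<H_a(0).
\]
I will compute $[z^j]H_a$ from the logarithmic derivative $\dot\alpha_k/\alpha_k=\sum_{i<k}\bigl(\tfrac1{a-i}-\tfrac1{a+i+1}\bigr)$. I expect the coefficients to have a fixed sign pattern for $j\ge1$; when $a\le1$, all factors $a-i$ with $i\ge1$ are negative. With a fixed sign, the sum telescopes through the recurrence $(a+k+1)\alpha_{k+1}=(a-k)\alpha_k$ and its $a$-derivative:
\[
 (a+k+1)\dot\alpha_{k+1}+\alpha_{k+1}=(a-k)\dot\alpha_k+\alpha_k .
\]
This identity is exactly the quantity appearing in $[z^j]H_a$ after multiplying by $k$. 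Its telescoping should give $\sum_{j\ge1}|[z^j]H_a|$ in closed form, bounded by $1/(a+1)$, with strict inequality (or a strict-maximum-principle argument) giving $\Rea H_a>0$ in the open disc. The main risk is that the sign pattern fails near $a=1$ or for small $j$. If so, I will split off the first one or two coefficients and use the boundary value $\Rea H_a(e^{i\theta})\ge0$ instead.

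\emph{Case $a\ge1$.} Here coefficient bounds will fail because $\alpha_k$ oscillates in sign for $k<a$. Instead I will use the reciprocal-gamma form. Writing $\alpha_k(a)=\Gamma(a+1)^2\sin(\pi(k-a))\Gamma(k-a)/(\pi\,\Gamma(a+k+1))$ and applying a beta integral gives $\alpha_k$ as a Stieltjes-type integral of $t^k$ against an explicit real, sign-changing density on $[0,1]$ (or on the circle via $|1+\zeta|^{2a}$ directly). Differentiating in $a$ and summing the geometric series in $H_a$ then expresses $\Rea H_a(z)$ as a boundary transform against an explicit kernel. By the maximum principle it suffices to check the boundary $z=e^{i\theta}$, since $H_a$ is continuous on $\overline\D$. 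On the boundary the transform becomes a real principal-value integral of $\log|1+\zeta|\,|1+\zeta|^{2a}$ against a cotangent-type kernel. I will show this transform is positive by splitting the density, via Abel summation in $k$, into a positive part plus a remainder lying in a finite-dimensional span of basis functions. The remainder is then controlled by an explicit inequality among finitely many elementary integrals in $a$.

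This last step is where I expect the real difficulty. Positivity must hold uniformly for all $a\ge1$ and all $\theta$, with near-degeneracy as $\theta\to\pi$, where $B\to-1$ corresponds to the extremal configuration. Getting the finite-basis inequality sharp enough there is the main obstacle.
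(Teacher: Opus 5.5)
\emph{The range $0<a\le1$.} Your telescoping step assumes that the coefficients $[z^j]H_a$ have a fixed sign pattern, and they do not. At $a=1/2$, for instance, $[z^1]H_a=14/45$, $[z^2]H_a=-22/1575$ and $[z^3]H_a=-2/2205$, which is neither constant nor alternating.

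In general, for $0<a<1$ one has $(-1)^{k-1}k(k+1)[z^k]H_a=R_kE_k$. Here $R_k=\prod_{j=1}^k\frac{j-a}{j+a}>0$ and $E_k=1-aS_k+T_k$, with $S_k=\sum_{j=1}^k\frac{2j}{j^2-a^2}\to\infty$ and $T_k\downarrow 2a$. So $E_1>0$, and $E_k>0$ as long as $aS_k<1$, but $E_k\to-\infty$. The alternation therefore reverses at an index that tends to infinity as $a\to0$. Splitting off one or two coefficients cannot repair this, and your fallback (boundary positivity) is the statement itself.

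The claimed exact match with the differentiated recurrence also fails, because the denominators $j$ and $j+1$ differ: $j\,[z^j]H_a=(\alpha_j-\alpha_{j+1})-\frac{a+j+1}{j+1}\dot\alpha_{j+1}$. So no closed form for $\sum_j|[z^j]H_a|$ emerges. What is needed is a termwise bound whose sum is $H_a(0)$, namely $|[z^k]H_a|\le\frac1{(a+1)k(k+1)}$, i.e.\ $|R_kE_k|\le\frac1{a+1}$. The paper obtains the lower side from $(a+1)aR_kS_k\le1$, proved by induction on $k$. It obtains the upper side from the monotone decrease of $R_k$ and $E_k$, together with $R_1E_1\le 1/(a+1)$.

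\emph{The range $a\ge1$.} Here your proposal names tools but constructs nothing; the step you flag as the main obstacle is the whole content of this case. Your representation also fails exactly where it matters: the beta integral for $\Gamma(k-a)/\Gamma(k+a+1)$ diverges for $k\le a$.

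The paper instead proceeds as follows:
\begin{enumerate}[label=\textnormal{(\arabic*)}]
\item Substitute $x=\cos^2(\phi/2)$, so that $\alpha_k=\int_0^1\omega_a(x)\mathrm T_k(2x-1)\,\dd x$ for every $k$.
\item Integrate by parts against the centered primitive $W_a$ and sum the $\mathrm U_j$ generating function. This gives $H_a(t)=\frac{t-1}{2t}\int_0^1\frac{R_a(x)}{x-\gamma(t)}\,\dd x$, where $R_a=aW_a-xW_a'$ and $\gamma(t)=(1+t)^2/(4t)$.
\item Hence $\Rea H_a(\e^{i\theta})=d\,\mathcal T_cR_a$, with $c=\cos^2(\theta/2)$ and $d=1-c$.
\end{enumerate}

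The decomposition that then works is not ``positive part plus finite-dimensional remainder''. Instead, $W_a=Z_a^{-1}\sum_{N\ge0}\delta_NF_{a,N}$ is an infinite series with every $\delta_N>0$. It comes from summation by parts on the Taylor coefficients of $\tau_a$, where $W_a=Z_a^{-1}x^{a+1/2}(1-x)^{1/2}\tau_a(1-x)$. Every basis element has positive transform, $\mathcal T_cR_{a,N}>0$. This is proved by induction on $N$ via $\mathcal J_{a,N+1}-d\,\mathcal J_{a,N}>0$. The induction uses the closed form $L_a(c)=\kappa_ac^a+2ad\int_0^1(c+dt^2)^{a-1}\,\dd t$, together with the bounds $L_a(c)\le2+2(a-1)c^a$ and $L_a(c)<2+2c$ for $c<1/2$. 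Without an explicit positivity mechanism of this kind, the case $a\ge1$ remains unproved.
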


Indeed, Theorem~\ref{thm:kernel} and \eqref{eq:area-identity} imply $\mathcal P_{2a}(q,B)\ge0$. Proposition~\ref{prop:entropy-reduction} then gives the entropy comparison \eqref{eq:entropy-comparison}. Sections~\ref{sec:small} and~\ref{sec:large} prove Theorem~\ref{thm:kernel} in the ranges $0<a\le1$ and $a\ge1$, respectively.
\section{Kernel positivity for \texorpdfstring{$0<a\le1$}{0<a<=1}}\label{sec:small}

Let $H_a$ be the kernel in \eqref{eq:H-definition}. Write
\begin{equation}\label{eq:beta-expansion}
  H_a(z)=\beta_0(a)+\sum_{k\ge1}\beta_k(a)z^k.
\end{equation}
Collecting adjacent terms in \eqref{eq:H-definition} gives
\begin{equation}\label{eq:beta-formula}
  \beta_0(a)=\frac1{a+1},
  \qquad
  \beta_k(a)
  =\frac{a+k+1}{k+1}\dot\alpha_{k+1}(a)
   -\frac{a-k}{k}\dot\alpha_k(a)
  \quad(k\ge1),
\end{equation}
where $\alpha_k(a)$ is defined by \eqref{eq:alpha-definition}. The small-parameter argument rests on the following sharp summable estimate.

\begin{lem}\label{lem:small-coefficients}
Let $0<a\le1$, and let $\beta_k(a)$ be defined by \eqref{eq:beta-expansion}--\eqref{eq:beta-formula}. Then
\begin{equation}\label{eq:small-coefficient-bound}
  |\beta_k(a)|
  \le\frac1{(a+1)k(k+1)},
  \qquad k\ge1.
\end{equation}
\end{lem}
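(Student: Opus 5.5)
The plan is to turn \eqref{eq:beta-formula} into a closed form built from $\alpha_k(a)$ and one logarithmic derivative, and then bound that closed form uniformly for $0<a\le1$ and $k\ge1$.

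\emph{Closed form for $\beta_k$.} Take first $0<a<1$; then no factor $a-j$ in \eqref{eq:alpha-formula} vanishes. Logarithmic differentiation of the product gives $\dot\alpha_k=\alpha_kS_k$, where
\[
S_k(a):=\sum_{j=0}^{k-1}\Bigl(\frac1{a-j}-\frac1{a+j+1}\Bigr).
\]
From $\alpha_{k+1}=\alpha_k(a-k)/(a+k+1)$ and $S_{k+1}=S_k+\frac1{a-k}-\frac1{a+k+1}$, substitution into \eqref{eq:beta-formula} should simplify to
\[
\beta_k(a)=\frac{\alpha_k(a)}{k+1}\Bigl(\frac{(k-a)S_k(a)}{k}+\frac{2k+1}{a+k+1}\Bigr).
\]
The bound \eqref{eq:small-coefficient-bound} is then equivalent to
\[
T_k(a):=(a+1)k\,|\alpha_k(a)|\,\Bigl|\frac{(k-a)S_k}{k}+\frac{2k+1}{a+k+1}\Bigr|\le1 .
\]
The case $a=1$ follows by continuity in $a$, since $\alpha_k(a)$ and $\dot\alpha_k(a)$ are continuous in $a$ via the gamma form. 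As a check, when $a=1$ and $k\ge2$ we have $\alpha_k(1)=0$, and one computes $\dot\alpha_k(1)$ directly.

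\emph{Size of the pieces.} For $0<a<1$ and $k\ge1$ we have $|\alpha_k|=\frac{a}{a+1}\prod_{j=1}^{k-1}\frac{j-a}{j+a+1}$. We also have
\[
S_k=\frac1a-\frac1{a+1}-\sum_{j=1}^{k-1}\Bigl(\frac1{j-a}+\frac1{j+a+1}\Bigr).
\]
So $aS_k$ lies between $1-\frac{a}{a+1}-2a\log k-O(a)$ and $1$, and the bracket splits into a positive part of size about $1/a$ minus a logarithmic part. As $a\to0$ the product tends to $1/k$ and $aS_k\to1$, so $T_k\to1$. The bound is therefore sharp at $a=0^+$, which rules out any crude estimate.

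\emph{Main step: uniform bound.} I would prove $T_k(a)\le1$ by induction on $k$. The case $k=1$ is explicit: $\alpha_1=a/(a+1)$ and $S_1=\frac1a-\frac1{a+1}$, so $T_1=a\bigl(\frac{1-a}{a}-\frac{1-a}{a+1}+\frac{3}{a+2}\bigr)$. This is a rational function of $a$, and $T_1\le1$ on $(0,1]$ can be checked directly. For the inductive step I would write
\[
T_{k+1}=T_k\cdot\frac{(k+1)(k-a)}{k(k+a+1)}\cdot\frac{|E_{k+1}|}{|E_k|},
\]
where $E_k$ denotes the bracket. I would then bound the last factor using $E_{k+1}-E_k$, which is explicit and of order $-1/k$. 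The first factor is $1-\frac{2a}{k}+O(k^{-2})$, and the logarithmic growth of $|E_k|$ must be absorbed by this decay.

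The obstacle is the regime where $E_k$ changes sign. This happens roughly when $2a\log k\approx1$, and there the ratio argument breaks down. In that regime I would instead bound $T_k$ directly. I would use $|\alpha_k|\le\frac{a}{a+1}\,\Gamma(2+a)\Gamma(k-a)/\bigl(\Gamma(1-a)\Gamma(k+a+1)\bigr)$ together with Gautschi-type inequalities, giving $|\alpha_k|\lesssim a\,k^{-1-2a}$. I would also use $|E_k|\le \frac1a+2\log k+3$. This leaves an elementary inequality of the form $(1+2a\log k+3a)k^{-2a}\le1$, up to constants, and the constants are what must be checked carefully.
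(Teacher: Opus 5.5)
Your closed form for $\beta_k$ is correct, and it is the paper's identity in other variables. With $u_k=|\alpha_k|$ and $R_k=\prod_{j=1}^k\frac{j-a}{j+a}$ one has $(k-a)u_k=aR_k$. Your $T_k(a)$ equals $(a+1)|C_k|$, where $C_k:=(-1)^{k-1}k(k+1)\beta_k=R_k\bigl(1-a\tilde S_k+\tilde T_k\bigr)$, with $\tilde S_k=\sum_{j=1}^k\frac{2j}{j^2-a^2}$ and $\tilde T_k=2a+\frac{a(a+1)(2a+1)}{(k-a)(k+a+1)}$. Your $k=1$ check is exactly the paper's $R_1E_1\le\frac1{a+1}$, i.e.\ $a^2\le a$.

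The gap is the main step, which is only sketched and cannot be closed as planned. Once the bracket is negative, put $j=k+1$, $t=\frac{j-a}{j+a}$, $s=\frac{2j}{j^2-a^2}$ and $e_k=a\tilde S_k-1-\tilde T_k>0$. Then $|C_{k+1}|-|C_k|=R_k\bigl[\frac{2a}{j+a}\bigl(\frac{j}{j+a}-e_k\bigr)+t(\tilde T_k-\tilde T_{k+1})\bigr]$, so $T_k$ strictly \emph{increases} as long as $e_k\le\frac{j}{j+a}$.

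For small $a$ this covers roughly the whole range $1\lesssim 2a\log k\lesssim 2$, not just a neighbourhood of the sign change. So no ratio bound $T_{k+1}/T_k\le1$ holds there, and your induction has nothing to propagate. The fallback direct estimate would then have to cover this entire regime, and as stated it is false. The bound $|E_k|\le\frac1a+2\log k+3$ fails as $a\to1^-$ for every $k\ge2$, because your $S_k$ contains the term $-\frac1{1-a}$; for $a=0.99$, $k=2$ one gets $|E_2|\approx 49$. The compensating factor $1-a$ sits in $\alpha_k$ (through $1/\Gamma(1-a)$), and you discard it when you pass to $|\alpha_k|\lesssim ak^{-1-2a}$. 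In addition, the Gautschi constants are left unchecked. For $a$ close to $1$ the sign change already occurs at $k=2$, where such asymptotics say nothing.

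The missing idea is to treat the two signs separately instead of following $|C_k|$ through ratios.
\begin{itemize}
\item Lower bound: since $R_k>0$ and $1+\tilde T_k>0$, one always has $C_k\ge-aR_k\tilde S_k$. The quantity $A_k:=(a+1)aR_k\tilde S_k$ obeys its own recursion $A_{k+1}=tA_k+D_k\,ats$, with $D_k=(a+1)R_k\le 1-a$ and $1-t-ats=\frac{2a^2}{(j+a)^2}\ge0$. Hence $A_1=\frac{2a}{a+1}\le1$ propagates to $A_k\le1$ for all $k$, wherever the sign change happens. Moreover, $R_k\tilde S_k$ stays bounded as $a\to1^-$, which disposes of the $\frac1{1-a}$ problem.
\item Upper bound: $R_k$ and $1-a\tilde S_k+\tilde T_k$ are both decreasing in $k$ for trivial reasons. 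So whenever $C_k>0$ it is at most $R_1E_1$, which is your $k=1$ case.
\end{itemize}
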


\begin{proof}
First let $0<a<1$. Put $u_k=(-1)^{k-1}\alpha_k(a)>0$ and
$C_k=(-1)^{k-1}k(k+1)\beta_k(a)$. Differentiating
$(a+k+1)\alpha_{k+1}=(a-k)\alpha_k$ in \eqref{eq:beta-formula} gives
\begin{equation}\label{eq:Ck-first}
 C_k=\frac{k(2k+1)}{a+k+1}u_k+(k-a)u_k'.
\end{equation}
Set
\[
 R_k=\prod_{j=1}^k\frac{j-a}{j+a},\qquad
 S_k=\sum_{j=1}^k\frac{2j}{j^2-a^2},\qquad
 T_k=2a+\frac{a(a+1)(2a+1)}{(k-a)(k+a+1)}.
\]
Since $(k-a)u_k=aR_k$ and $R_k'=-R_kS_k$, equation \eqref{eq:Ck-first} becomes
\begin{equation}\label{eq:Ck-exact}
 C_k=R_k(1-aS_k+T_k).
\end{equation}

For the lower bound, we prove $(a+1)aR_kS_k\le1$ by induction.
Write $A_k=(a+1)aR_kS_k$ and $D_k=(a+1)R_k\le1-a<1$.
The initial value is $A_1=2a/(a+1)\le1$. For $j=k+1$, put
$t=(j-a)/(j+a)$ and $s=2j/(j^2-a^2)$. Then
\[
 1-t-ats=\frac{2a^2}{(j+a)^2}\ge0,\qquad
 A_{k+1}=tA_k+D_kats\le t+ats\le1.
\]
Thus \eqref{eq:Ck-exact} gives
\begin{equation}\label{eq:Ck-lower}
 C_k\ge-aR_kS_k\ge-\frac1{a+1}.
\end{equation}

For the upper bound, both $R_k$ and $E_k:=1-aS_k+T_k$ decrease.
If $E_k\le0$, then $C_k\le0$. Otherwise,
\[
 C_k=R_kE_k\le R_1E_1
 =\frac{2(a^2+a+1)}{(a+1)^2(a+2)}\le\frac1{a+1},
\]
where the last inequality is equivalent to $a^2\le a$.
Together with \eqref{eq:Ck-lower}, this proves the result for $0<a<1$.
Each $\beta_k$ is continuous for $a>0$, so the bound also holds at $a=1$.
\end{proof}

We now conclude the proof of Theorem~\ref{thm:kernel} in the range $0<a\le1$.

\begin{proof}[Proof of Theorem~\ref{thm:kernel} for $0<a\le1$]
Let $|z|=\rho<1$. By Lemma~\ref{lem:small-coefficients},
\[
  \Rea H_a(z)
  \ge\frac1{a+1}
  \left(
    1-\sum_{k\ge1}\frac{\rho^k}{k(k+1)}
  \right)>0,
\]
because $\sum_{k\ge1}1/[k(k+1)]=1$ and $\rho^k<1$. This proves \eqref{eq:kernel-positivity} in the stated range.
\end{proof}
\section{Kernel positivity for \texorpdfstring{$a\ge1$}{a>=1}}\label{sec:large}

Throughout this section, $a\ge1$ is fixed. Let
\[
\mathrm B(x,y)
:=\int_0^1 t^{x-1}(1-t)^{y-1}\,\dd t
=\frac{\Gamma(x)\Gamma(y)}{\Gamma(x+y)},
\qquad x,y>0,
\]
denote the Euler beta function. Set
\begin{equation}\label{eq:beta-density}
	Z_a
	:=\mathrm B\!\left(a+\frac12,\frac12\right)
	=\frac{\sqrt\pi\,\Gamma(a+1/2)}{\Gamma(a+1)},
	\qquad
	\omega_a(x)
	:=\frac{x^{a-1/2}(1-x)^{-1/2}}{Z_a},
	\quad 0<x<1.
\end{equation}
We use the Pochhammer symbol $(x)_m:=\Gamma(x+m)/\Gamma(x)$ for $m\ge0$. Define also
\begin{equation}\label{eq:mean-log}
  m_a:=\int_0^1\omega_a(x)\log x\,\dd x
  <0.
\end{equation}
Thus $\omega_a$ is a probability density. Define its centered primitive and the associated first-order expression by
\begin{equation}\label{eq:Wa-Ra}
  W_a(x):=-\int_0^x\omega_a(u)(\log u-m_a)\,\dd u,
  \qquad
  R_a(x):=aW_a(x)-xW_a'(x).
\end{equation}
The definition gives $W_a(0)=0$, while the centering in
\eqref{eq:mean-log} gives $W_a(1)=0$.

For $0<c<1$, put $d:=1-c$ and define
\begin{equation}\label{eq:Tc-definition}
  \mathcal T_c f
  :=\PV\int_0^1\frac{f(x)}{x-c}\,\dd x
  +\pi\sqrt{\frac cd}\,f(c),
\end{equation}
whenever the principal value exists. The main task of this section is to prove
\[
  \mathcal T_c R_a>0,
  \qquad 0<c<1.
\]

We will use the following elementary boundary formula. It also specifies
the sign convention for all principal values below.

\begin{lem}\label{lem:cauchy-boundary}
Let $f\in L^1(0,1)$ be differentiable at $c\in(0,1)$. Then
\begin{equation}\label{eq:PV-subtraction}
 \PV\int_0^1\frac{f(x)}{x-c}\,\dd x
 =\int_0^1\frac{f(x)-f(c)}{x-c}\,\dd x
  +f(c)\log\frac{1-c}{c}.
\end{equation}
If $z\to c$ with $\Im z<0$ and $|\Rea z-c|\le-\Im z$, then
\begin{equation}\label{eq:cauchy-boundary}
 \int_0^1\frac{f(x)}{x-z}\,\dd x
 \longrightarrow\PV\int_0^1\frac{f(x)}{x-c}\,\dd x-i\pi f(c).
\end{equation}
\end{lem}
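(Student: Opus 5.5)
For \eqref{eq:PV-subtraction}, fix a small $\varepsilon>0$ and split the truncated integral over $[0,c-\varepsilon]\cup[c+\varepsilon,1]$ as
\[
 \int\frac{f(x)-f(c)}{x-c}\,\dd x+f(c)\int\frac{\dd x}{x-c}.
\]
The second integral is computed exactly:
\[
 \log\frac{\varepsilon}{c}+\log\frac{1-c}{\varepsilon}=\log\frac{1-c}{c},
\]
which does not depend on $\varepsilon$. For the first integral, differentiability of $f$ at $c$ makes the difference quotient bounded near $c$. Away from $c$, $f\in L^1$ and $1/(x-c)$ is bounded, so the integrand is in $L^1(0,1)$. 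Dominated convergence then lets $\varepsilon\to0$. This proves both that the principal value exists and that \eqref{eq:PV-subtraction} holds.

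For \eqref{eq:cauchy-boundary}, I would use the same subtraction at a point $z=c+u-iv$ with $v>0$ and $|u|\le v$:
\[
 \int_0^1\frac{f(x)}{x-z}\,\dd x=\int_0^1\frac{f(x)-f(c)}{x-z}\,\dd x+f(c)\int_0^1\frac{\dd x}{x-z}.
\]
The second term is explicit: with the principal branch it equals $f(c)\bigl(\operatorname{Log}(1-z)-\operatorname{Log}(-z)\bigr)$. As $z\to c$ from below, $\operatorname{Log}(1-z)\to\log(1-c)$, while $-z\to-c$ from the upper half-plane, so $\operatorname{Log}(-z)\to\log c+i\pi$. The limit is therefore $f(c)\bigl(\log\frac{1-c}{c}-i\pi\bigr)$.

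The first term is the main point and needs a uniform dominating function; this is where the nontangential condition is used. Since $|u|\le v$, for every $x$
\[
 |x-z|^2=(x-c-u)^2+v^2\ge\tfrac12(x-c)^2.
\]
To see this, note that if $|x-c|\le2v$ then $v^2\ge(x-c)^2/4$. Otherwise $|x-c-u|\ge|x-c|-v\ge|x-c|/2$, which gives $|x-z|\ge|x-c|/2$ in both cases; one should recheck the constant, but any positive constant suffices. Hence
\[
 \left|\frac{f(x)-f(c)}{x-z}\right|\le2\left|\frac{f(x)-f(c)}{x-c}\right|,
\]
and the right side is integrable by the first part. Dominated convergence gives convergence to $\int_0^1\frac{f(x)-f(c)}{x-c}\,\dd x$. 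Adding the two limits and applying \eqref{eq:PV-subtraction} yields \eqref{eq:cauchy-boundary}.

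The only delicate step is this domination. Without the cone condition $|\Rea z-c|\le-\Im z$, the quotient $|x-c|/|x-z|$ need not stay bounded, and the argument would need more regularity of $f$ than differentiability at the single point $c$.
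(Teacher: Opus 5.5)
Your proof is correct and matches the paper's argument essentially step for step: the symmetric-cutoff subtraction with the exact value $\log\frac{1-c}{c}$, the split of $\int_0^1\frac{f(x)}{x-z}\,\dd x$ into $\int_0^1\frac{x-c}{x-z}\cdot\frac{f(x)-f(c)}{x-c}\,\dd x$ plus $f(c)\bigl(\operatorname{Log}(1-z)-\operatorname{Log}(-z)\bigr)$, and dominated convergence via the cone bound $|x-c|\le 2|x-z|$. Your displayed inequality $|x-z|^2\ge\frac12(x-c)^2$ is actually true, since $(x-c-u)^2+v^2\ge(x-c-u)^2+u^2\ge\frac12(x-c)^2$, although your case analysis only gives the constant $\frac14$; either constant suffices.
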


\begin{proof}
The difference quotient $q_c(x)=(f(x)-f(c))/(x-c)$ is integrable:
it is bounded near $c$, and its denominator is bounded away from zero
elsewhere. Subtracting $f(c)$ under a symmetric cutoff gives
\eqref{eq:PV-subtraction}, since the cutoff integral of $1/(x-c)$ is
$\log((1-c)/c)$. For the boundary limit, write
\[
 \int_0^1\frac{f(x)}{x-z}\,\dd x
 =\int_0^1\frac{x-c}{x-z}q_c(x)\,\dd x
  +f(c)\bigl(\operatorname{Log}(1-z)-\operatorname{Log}(-z)\bigr),
\]
using principal logarithms. In the stated cone, $|(x-c)/(x-z)|\le2$,
so dominated convergence applies to the first term. The logarithmic
difference tends to $\log((1-c)/c)-i\pi$, proving the formula.
\end{proof}

\subsection{Boundary values of the kernel}

Define
\begin{equation}\label{eq:G-gamma}
  G_a(t):=\sum_{k\ge1}\frac{\dot\alpha_k(a)}{k}t^k,
  \qquad
  \gamma(t):=\frac{(1+t)^2}{4t},
  \qquad 0<|t|<1,
\end{equation}
where $\alpha_k(a)$ is given by \eqref{eq:alpha-definition}. The Joukowski map converts the kernel into a Stieltjes transform with controlled boundary values.

\begin{prop}\label{prop:boundary-representation}
For $a\ge1$, the kernel has the representation
\begin{equation}\label{eq:H-Stieltjes}
  H_a(t)
  =\frac{t-1}{2t}\int_0^1\frac{R_a(x)}{x-\gamma(t)}\,\dd x,
  \qquad 0<|t|<1.
\end{equation}
If $0<\theta<\pi$ and
$
  c:=\cos^2\frac\theta2,
  d:=\sin^2\frac\theta2,
$
then
\begin{equation}\label{eq:boundary-H}
  \Rea H_a(\e^{i\theta})=d\,\mathcal T_c R_a,
\end{equation}
where $\mathcal T_c$ is defined by \eqref{eq:Tc-definition}.
\end{prop}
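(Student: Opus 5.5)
The plan is to compute everything through the change of variables $x=\cos^2(\theta/2)$, which turns the model density into the beta density $\omega_a$ and the Fourier characters into Chebyshev polynomials. Under $\zeta=\e^{i\theta}$ we have $|1+\zeta|^{2a}=(4x)^a$. The push-forward of $m$ under $\theta\mapsto x$ is the arcsine law $\dd x/(\pi\sqrt{x(1-x)})$, and $\cos k\theta=T_k(2x-1)$. Hence
\[
\alpha_k(a)=\int_0^1\omega_a(x)T_k(2x-1)\,\dd x .
\]
Differentiating in $a$ (the domination argument of Lemma~\ref{lem:entropy-derivative} applies) gives
\[
\dot\alpha_k(a)=\int_0^1\omega_a(x)(\log x-m_a)T_k(2x-1)\,\dd x=-\int_0^1 W_a'(x)T_k(2x-1)\,\dd x ,
\]
by \eqref{eq:Wa-Ra}.

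Next I sum the generating function. For real $t$ with $0<|t|<1$ and $y\in[-1,1]$, one has $\sum_{k\ge1}T_k(y)t^k/k=-\tfrac12\log(1-2yt+t^2)$. With $y=2x-1$ this becomes $1-2yt+t^2=4t(\gamma(t)-x)$. The $\log(4t)$ contribution vanishes because $\int W_a'=W_a(1)-W_a(0)=0$. Integrating by parts, using $W_a(0)=W_a(1)=0$, gives
\[
G_a(t)=\frac12\int_0^1 W_a'(x)\log(\gamma(t)-x)\,\dd x=-\frac12\int_0^1\frac{W_a(x)}{x-\gamma(t)}\,\dd x .
\]
This first holds for small real $t$ (there $\gamma(t)\notin[0,1]$) and extends to $0<|t|<1$ by analytic continuation. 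That extension is legitimate because $\gamma$ maps the punctured disk into $\C\setminus[0,1]$.

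Now I rewrite $H_a$ through $G_a$. Directly from \eqref{eq:H-definition},
\[
H_a(t)=\frac{a(1-t)}{t}G_a(t)+(1+t)G_a'(t).
\]
The derivative is $G_a'=-\tfrac12\gamma'(t)\int W_a/(x-\gamma)^2\,\dd x$. One more integration by parts, again using that $W_a$ vanishes at both endpoints, turns this into $-\tfrac12\gamma'(t)\int W_a'/(x-\gamma)\,\dd x$. The hypothesis $a\ge1$ guarantees enough decay of $W_a$ and $W_a'$ near $0$, where $W_a\sim x^{a+1/2}\log x$, and the factor $(1-x)^{1/2}$ controls the behaviour at $1$. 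Using $(1+t)\gamma'(t)=(t-1)\gamma(t)/t$ and
\[
\int_0^1\frac{xW_a'}{x-\gamma}\,\dd x=\gamma\int_0^1\frac{W_a'}{x-\gamma}\,\dd x
\]
(again because $\int W_a'=0$), the two terms combine into $\frac{t-1}{2t}\int (aW_a-xW_a')/(x-\gamma)\,\dd x$. This is \eqref{eq:H-Stieltjes}.

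For \eqref{eq:boundary-H}, note that $H_a$ is continuous on $\overline\D$, so $H_a(\e^{i\theta})=\lim_{\rho\uparrow1}H_a(\rho\e^{i\theta})$. Along this radius $\gamma(\rho\e^{i\theta})\to\cos^2(\theta/2)=c$. Since $\partial_\rho\gamma$ at $\rho=1$ equals $\tfrac i2\sin\theta$, the approach is perpendicular to $\R$ from the lower half-plane, which lies inside the cone required in Lemma~\ref{lem:cauchy-boundary}. The function $R_a$ is integrable and smooth on $(0,1)$, so that lemma gives the limit $\PV\int R_a/(x-c)\,\dd x-i\pi R_a(c)$. The prefactor is $(t-1)/(2t)=d+i\sqrt{cd}$ at $t=\e^{i\theta}$. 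Taking the real part gives $d\,\PV\!\int\frac{R_a}{x-c}\,\dd x+\pi\sqrt{cd}\,R_a(c)=d\,\mathcal T_cR_a$.

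I expect the main obstacle to be bookkeeping rather than ideas. There are three points to track: the sign and side of the boundary approach, the justification of the endpoint terms in both integrations by parts, and the termwise summation of the Chebyshev series inside the integral, which is justified for $|t|<1$ by uniform convergence.
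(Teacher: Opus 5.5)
Your proposal is correct and follows essentially the paper's route. Both arguments use the Chebyshev representation of $\alpha_k$ and integrate by parts against $W_a$ to get $G_a=-\tfrac12\int_0^1 W_a(x)(x-\gamma(t))^{-1}\,\dd x$, then combine $H_a=(1+t)G_a'+a(t^{-1}-1)G_a$ with $(1+t)\gamma'=(t-1)\gamma/t$, and finish with the cone form of the boundary lemma at $\gamma(\e^{i\theta})=c$. The differences are only in ordering: you sum the logarithmic series $\sum_k \mathrm T_k t^k/k$ before integrating by parts, hence your analytic continuation from small real $t$, whereas the paper integrates by parts first to get $\dot\alpha_k=2k\int_0^1 W_a\,\mathrm U_{k-1}(2x-1)\,\dd x$ and applies the $\mathrm U$-generating function on the whole punctured disk directly; also, the endpoint terms vanish for every $a>0$, so $a\ge1$ is not what justifies them.
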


\begin{proof}
The substitution $x=\cos^2(\phi/2)$ in \eqref{eq:alpha-definition} gives
\[
 \alpha_k(a)=\int_0^1\omega_a(x)\,\mathrm T_k(2x-1)\,\dd x,
\]
where $\mathrm T_k$ is a Chebyshev polynomial. Differentiation in $a$ is
justified locally by the integrable bound
$C x^{a/2-1/2}(1-x)^{-1/2}(1+|\log x|)$.
Since $W_a'=-\omega_a(\log x-m_a)$ and $W_a(0)=W_a(1)=0$,
integration by parts yields
\begin{equation}\label{eq:alpha-dot-W}
 \dot\alpha_k(a)=2k\int_0^1W_a(x)\,\mathrm U_{k-1}(2x-1)\,\dd x.
\end{equation}
Here $\mathrm U_j$ is the Chebyshev polynomial of the second kind.
The bounds $|\mathrm U_j(s)|\le j+1$ for $|s|\le1$ and
$\sum_{j\ge0}(j+1)|t|^{j+1}<\infty$ justify summation under the integral.
The generating function $\sum_{j\ge0}\mathrm U_j(s)t^j=(1-2st+t^2)^{-1}$
\cite[p.~449, Eq.~(18.12.10)]{OLBC10} therefore gives
\begin{equation}\label{eq:G-Stieltjes}
 G_a(t)=\frac12\int_0^1\frac{W_a(x)}{\gamma(t)-x}\,\dd x.
\end{equation}

Write $C_f(w)=\int_0^1 f(x)/(x-w)\,\dd x$. The map $\gamma$ sends
$0<|t|<1$ outside $[0,1]$. Integration by parts gives
\[
 C_{R_a}(w)=aC_{W_a}(w)-wC_{W_a}'(w).
\]
Also, the series defining $G_a$ and $H_a$ give
\[
 H_a(t)=(1+t)G_a'(t)+a(t^{-1}-1)G_a(t).
\]
Substituting $G_a=-C_{W_a}\circ\gamma/2$ and using
$-(1+t)\gamma'(t)=(1-t)\gamma(t)/t$ proves \eqref{eq:H-Stieltjes}.

For the boundary limit, put $t=r\e^{i\theta}$, $0<\theta<\pi$.
With $c=\cos^2(\theta/2)$, one has
\[
 \Rea\gamma(t)-c=\frac{(1-r)^2\cos\theta}{4r},\qquad
 -\Im\gamma(t)=\frac{(1-r^2)\sin\theta}{4r}.
\]
Thus $\gamma(t)$ eventually lies in the cone of
Lemma~\ref{lem:cauchy-boundary}. Moreover, $R_a\in L^1(0,1)$ because
$W_a$ is bounded and $W_a'\in L^1(0,1)$; it is smooth near $c$.
The lemma consequently gives
\[
 C_{R_a}(\gamma(t))\longrightarrow
 \PV\int_0^1\frac{R_a(x)}{x-c}\,\dd x-i\pi R_a(c).
\]
Since $(\e^{i\theta}-1)/(2\e^{i\theta})=d+i\sqrt{cd}$, taking real
parts in \eqref{eq:H-Stieltjes} proves \eqref{eq:boundary-H}.
\end{proof}

\subsection{The beta transform}

Define
\begin{equation}\label{eq:L-kappa}
  L_a(c):=(1-c)\mathcal T_c\omega_a,
  \qquad
  \kappa_a:=\frac\pi{Z_a}
  =\frac{\sqrt\pi\,\Gamma(a+1)}{\Gamma(a+1/2)}.
\end{equation}

The transform of the basic beta density is explicitly positive.

\begin{lem}\label{lem:beta-transform}
If $a\ge1$, $0<c<1$, and $d=1-c$, then
\begin{equation}\label{eq:L-positive}
  L_a(c)
  =\kappa_a c^a
   +2ad\int_0^1(c+dt^2)^{a-1}\,\dd t>0.
\end{equation}
Moreover,
\begin{equation}\label{eq:kappa-bound}
  \kappa_a\le2a,
\end{equation}
with equality only at $a=1$.
\end{lem}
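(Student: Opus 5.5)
The plan is to split $L_a(c)$ into its point-mass part and its principal-value part and evaluate each one separately. The point-mass term is explicit. Since $\omega_a(c)=c^{a-1/2}d^{-1/2}/Z_a$,
\[
(1-c)\,\pi\sqrt{c/d}\,\omega_a(c)=\frac{\pi}{Z_a}c^a=\kappa_a c^a .
\]
This already gives the first summand in \eqref{eq:L-positive}. It therefore remains to prove
\[
 \PV\int_0^1\frac{x^{a-1/2}(1-x)^{-1/2}}{x-c}\,\dd x
 =2aZ_a\int_0^1(c+dt^2)^{a-1}\,\dd t .
\]
Once this identity holds, positivity is immediate: for $a\ge1$ and $0<c<1$ both summands are positive, because $\kappa_a>0$ and the integrand $(c+dt^2)^{a-1}$ is positive.

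For the principal value, I will factor the weight as $x^{a}\cdot x^{-1/2}(1-x)^{-1/2}$. I will then use the classical fact that the arcsine density has vanishing finite Hilbert transform on $(0,1)$:
\[
 \PV\int_0^1\frac{x^{-1/2}(1-x)^{-1/2}}{x-c}\,\dd x=0 .
\]
This can be checked with \eqref{eq:PV-subtraction}, or by the substitution $x=\sin^2\phi$, which turns it into a standard conjugate-function computation. Subtracting $c^a$ as in Lemma~\ref{lem:cauchy-boundary} leaves an ordinary integral:
\[
 \int_0^1\frac{x^a-c^a}{x-c}\,\frac{\dd x}{\sqrt{x(1-x)}}
 =a\int_0^1\!\!\int_0^1\bigl(c+s(x-c)\bigr)^{a-1}\frac{\dd x}{\sqrt{x(1-x)}}\,\dd s .
\]
Here the difference quotient is written as $a\int_0^1(c+s(x-c))^{a-1}\,\dd s$, and Fubini applies because everything is nonnegative when $a\ge1$.

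The main obstacle is reducing this double integral to $2Z_a\int_0^1(c+dt^2)^{a-1}\,\dd t$. My first attempt would be to parametrize $x=\sin^2\phi$. The inner integrand then becomes a power of the affine expression $c(1-s)+s\sin^2\phi$. The goal is to find a change of variables $(s,\phi)\mapsto(t,\cdot)$ under which the remaining variable integrates out and produces the beta constant $Z_a$. If no clean substitution appears, I would fall back on comparing both sides as functions of $c$. They agree at $c=1$, where the left side is the convergent integral $\int_0^1 x^{a-1/2}(1-x)^{-3/2}\ldots$ interpreted via \eqref{eq:PV-subtraction}, or alternatively at $c\to0^+$. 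I would then check that both sides satisfy the same first-order differential equation in $c$. For the left side, this equation comes from integrating by parts against $\omega_a$, using $x\omega_a'=((a-\tfrac12)-\tfrac{x}{2(1-x)}\cdot\ldots)\omega_a$. A second fallback is to expand both sides in hypergeometric series in $d$ and match coefficients.

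For \eqref{eq:kappa-bound}, the computation $\kappa_1=\sqrt\pi\,\Gamma(2)/\Gamma(3/2)=2$ gives equality at $a=1$. I will show that $\kappa_a/a=\sqrt\pi\,\Gamma(a)/\Gamma(a+1/2)$ is strictly decreasing in $a$. Its logarithmic derivative is $\psi(a)-\psi(a+1/2)$, which is negative because the digamma function $\psi$ is strictly increasing (equivalently, $\log\Gamma$ is strictly convex). Hence $\kappa_a/a<\kappa_1/1=2$ for $a>1$, which proves $\kappa_a\le2a$ with equality only at $a=1$.
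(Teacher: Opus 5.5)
Your preliminary steps are sound. The residue term is indeed $\kappa_ac^a$. The vanishing finite Hilbert transform of the arcsine density on $(0,1)$ legitimately turns the principal value into the ordinary integral $\int_0^1\frac{x^a-c^a}{x-c}\frac{\dd x}{\sqrt{x(1-x)}}$. Your digamma argument for $\kappa_a\le2a$ is also correct; the paper's is shorter: $\kappa_a=a\,\mathrm B(a,\tfrac12)=a\int_0^1u^{a-1}(1-u)^{-1/2}\,\dd u\le2a$, with strict inequality for $a>1$.

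But the substance of the lemma is the closed form
\[
\int_0^1\frac{x^a-c^a}{x-c}\,\frac{\dd x}{\sqrt{x(1-x)}}=2aZ_a\int_0^1(c+dt^2)^{a-1}\,\dd t,
\]
and you never prove it. You call it the main obstacle and list strategies without carrying any of them out, and none is close to a proof as written.
\begin{itemize}
\item Your double integral depends on $a$ only through $(c+s(x-c))^{a-1}$, so any substitution yields $\int u^{a-1}\,\dd\mu(u)$ with $\mu$ independent of $a$. The $a$-dependent factor $Z_a$ can only emerge if the map factors $c+s(x-c)$ multiplicatively as $y\,(c+dt^2)$, with a Jacobian producing the weight $y^{1/2}(1-y)^{-1/2}$. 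You have no such map.
\item In the ODE fallback no equation is derived. The logarithmic derivative is left incomplete and has the wrong sign; it is $x\omega_a'/\omega_a=(a-\tfrac12)+\tfrac{x}{2(1-x)}$.
\item The proposed initial value at $c=1$ refers to $\int_0^1x^{a-1/2}(1-x)^{-3/2}\,\dd x$, which diverges.
\end{itemize}
Positivity in \eqref{eq:L-positive} is therefore not established either, since it rests on the unproved formula.

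The missing idea is to apply your arcsine fact on every interval $(u,1)$, not once on $(0,1)$ after pulling out $x^a$. Abel's half-integral $\int_0^xu^{a-1}(x-u)^{-1/2}\,\dd u=\mathrm B(a,\tfrac12)\,x^{a-1/2}$, together with $Z_a\,\mathrm B(a,\tfrac12)=\pi/a$, gives the mixture
\[
\omega_a(x)=\int_0^1au^{a-1}f_u(x)\,\dd u,\qquad f_u(x)=\frac{\mathbf 1_{(u,1)}(x)}{\pi\sqrt{(x-u)(1-x)}}.
\]
For $u<c$ the point $c$ is interior to $(u,1)$, so the principal-value transform of the arcsine density $f_u$ at $c$ vanishes. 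For $u>c$ it is an ordinary integral equal to $((u-c)d)^{-1/2}$. Hence
\[
\PV\int_0^1\frac{\omega_a(x)}{x-c}\,\dd x=a\int_c^1u^{a-1}((u-c)d)^{-1/2}\,\dd u,
\]
and the substitution $u=c+dt^2$ gives exactly $2a\int_0^1(c+dt^2)^{a-1}\,\dd t$.

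The interchange of the principal value with the $u$-integral still needs justification. The paper obtains it as follows: it uses the explicit Cauchy transform $\int_u^1 f_u(x)(x-z)^{-1}\,\dd x=(u-z)^{-1/2}(1-z)^{-1/2}$ for $\Im z<0$ and applies Fubini off the axis. It then uses dominated convergence as $z=c-i\varepsilon\to c$, with majorant $a\,d^{-1/2}|u-c|^{-1/2}$, and finishes with Lemma~\ref{lem:cauchy-boundary}.
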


\begin{proof}
For $0<u<1$, let
\[
 f_u(x)=\frac{\mathbf1_{(u,1)}(x)}{\pi\sqrt{(x-u)(1-x)}}.
\]
The beta integral gives the mixture identity
\[
 \int_0^1au^{a-1}f_u(x)\,\dd u=\omega_a(x).
\]
For $\Im z<0$, the substitution $x=u+(1-u)\sin^2v$ gives
\[
 A_u(z):=\int_u^1\frac{f_u(x)}{x-z}\,\dd x
 =\frac1{\sqrt{u-z}\sqrt{1-z}},
\]
where both square roots are principal. The identity follows first for
real $z<u$ by the elementary tangent substitution and then by analytic
continuation. Absolute integrability off the real axis permits Fubini:
\[
 \int_0^1\frac{\omega_a(x)}{x-z}\,\dd x
 =a\int_0^1u^{a-1}A_u(z)\,\dd u.
\]
For $z=c-i\varepsilon$, the real part of $A_u(z)$ tends to $0$ for $u<c$
and to $((u-c)d)^{-1/2}$ for $u>c$. Furthermore,
\[
 |A_u(c-i\varepsilon)|\le\frac1{\sqrt d\sqrt{|u-c|}},\qquad
 au^{a-1}\le a.
\]
This is an integrable majorant. Dominated convergence and
Lemma~\ref{lem:cauchy-boundary}, applied to $\omega_a$, yield
\[
 \PV\int_0^1\frac{\omega_a(x)}{x-c}\,\dd x
 =a\int_c^1\frac{u^{a-1}}{\sqrt{(u-c)d}}\,\dd u.
\]
The residue term in $d\mathcal T_c\omega_a$ is
$\pi\sqrt{cd}\,\omega_a(c)=\kappa_a c^a$.
Substituting $u=c+dt^2$ in the remaining integral proves
\eqref{eq:L-positive}. Finally,
\[
 \kappa_a=a\,\mathrm B(a,1/2)
 =a\int_0^1u^{a-1}(1-u)^{-1/2}\,\dd u\le2a,
\]
with equality exactly when $a=1$.
\end{proof}

\subsection{A positive Abel decomposition}

For every integer $m\ge0$, define the beta moments
\[
  \nu_m(a)
  :=\int_0^1\omega_a(x)(1-x)^m\,\dd x
  =\frac{(1/2)_m}{(a+1)_m}.
\]
They satisfy
\begin{equation}\label{eq:nu-recurrence}
  (a+m)\nu_m(a)
  =\left(m-\frac12\right)\nu_{m-1}(a),
  \qquad m\ge1.
\end{equation}

The function $R_a$ need not have a fixed sign. The following positive Abel decomposition resolves it into elementary pieces whose transforms can be controlled individually.

\begin{lem}\label{lem:abel-decomposition}
For every $a\ge1$, there are numbers
\begin{equation}\label{eq:delta-properties}
  \delta_N(a)>0,
  \qquad
  \sum_{N\ge0}\delta_N(a)<\infty,
\end{equation}
such that
\begin{equation}\label{eq:W-basis}
  W_a(x)=\frac1{Z_a}
  \sum_{N\ge0}\delta_N(a)F_{a,N}(x),
\end{equation}
where
\begin{equation}\label{eq:F-basis}
  F_{a,N}(x)
  :=x^{a-1/2}(1-x)^{1/2}
    \bigl(1-(1-x)^{N+1}\bigr).
\end{equation}
If
\begin{equation}\label{eq:R-basis-def}
  R_{a,N}(x):=aF_{a,N}(x)-xF_{a,N}'(x),
\end{equation}
then
\begin{equation}\label{eq:R-basis}
  R_a(x)=\frac1{Z_a}
  \sum_{N\ge0}\delta_N(a)R_{a,N}(x).
\end{equation}
The series in \eqref{eq:R-basis} converges absolutely in $L^1(0,1)$. The series in \eqref{eq:W-basis} and \eqref{eq:R-basis}, together with their termwise derivatives, converge locally uniformly on $(0,1)$.
\end{lem}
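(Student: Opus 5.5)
The plan is to reduce the statement to a sign and monotonicity property of the power-series coefficients of a single explicit function. The starting observation is
\[
1-(1-x)^{N+1}=x\sum_{j=0}^{N}(1-x)^j .
\]
For numbers $\delta_N\ge0$ with finite sum, put $e_j:=\sum_{N\ge j}\delta_N$. A formal rearrangement then gives
\[
\sum_{N\ge0}\delta_N F_{a,N}(x)=x^{a+1/2}(1-x)^{1/2}\sum_{j\ge0}e_j(1-x)^j .
\]
So \eqref{eq:W-basis} is equivalent to the following. Define, with $y=1-x$,
\[
E_a(y):=\frac{Z_aW_a(x)}{x^{a+1/2}(1-x)^{1/2}}=\sum_{j\ge0}e_j\,y^j .
\]
We need the coefficients $e_j$ to be strictly decreasing and to tend to $0$. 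Then $\delta_N:=e_N-e_{N+1}>0$ and $\sum_N\delta_N=e_0<\infty$.

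\textbf{Computing the $e_j$.} Using $W_a(1)=0$, write
\[
Z_aW_a(x)=\int_x^1u^{a-1/2}(1-u)^{-1/2}(\log u-m_a)\,\dd u .
\]
This is analytic in $y$ near $0$ after the factor $y^{1/2}$ is removed, so $E_a$ is analytic in $|y|<1$. To get the coefficients, differentiate: $Z_aW_a'=-x^{a-1/2}y^{-1/2}(\log x-m_a)$. Substituting $W_a=x^{a+1/2}y^{1/2}E_a/Z_a$ gives a first-order linear ODE:
\[
\tfrac12 x E_a-(a+\tfrac12)yE_a+xy\,\tfrac{\dd}{\dd x}E_a=-(\log x-m_a),\qquad x=1-y.
\]
Expand $\log x=-\sum_{k\ge1}y^k/k$. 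The centering is encoded through $m_a=-\sum_k\nu_k(a)/k$, using the beta moments $\nu_k$. Comparing coefficients of $y^j$ yields a recurrence expressing $e_j$ in terms of $e_{j-1}$ and $1/j$. I expect it to have the form
\[
(j+\tfrac12)e_j=(j+a)e_{j-1}-\tfrac1j+\text{const}\cdot[j=0],
\]
with the constant fixed by the centering. Solving it with the help of \eqref{eq:nu-recurrence} should give a closed form. A natural guess is
\[
e_j=\frac{(a+1)_j}{(1/2)_{j+1}}\sum_{k>j}\frac{\nu_k(a)}{k},
\]
that is, a normalized tail of the series defining $-m_a$. This is the form I would aim to establish.

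\textbf{Main obstacle: strict decrease $e_{j+1}<e_j$.} With the tail form, this amounts to comparing $\sum_{k>j+1}\nu_k/k$ with $\frac{j+3/2}{j+a+1}\sum_{k>j}\nu_k/k$. The key relation is $\nu_{k}/\nu_{k-1}=(k-\tfrac12)/(a+k)$. I would show termwise that each tail term satisfies the needed comparison, i.e. $\frac{\nu_{k+1}/(k+1)}{\nu_k/k}\le\frac{j+3/2}{j+a+1}$ for $k\ge j+1$. This reduces to
\[
\frac{k(k+\tfrac12)}{(k+1)(a+k+1)}<\frac{j+3/2}{j+a+1},
\]
which is elementary for $a\ge1$. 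If termwise comparison fails, I would fall back on an Abel-summation comparison of the two tails.

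\textbf{Limit and convergence claims.} The limit $e_j\to0$ follows from $\nu_k\asymp k^{1/2-a-1}$. This gives $\sum_{k>j}\nu_k/k\asymp j^{-a-1/2}$, against a prefactor $\asymp j^{a-1/2}$, so $e_j\asymp 1/j$. This matches the logarithmic blow-up of $E_a$ at $y=1$.

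For \eqref{eq:R-basis}, $R_{a,N}=aF_{a,N}-xF_{a,N}'$. Every term of $F_{a,N}'$ except one is bounded uniformly in $N$; the remaining term is
\[
(N+1)x^{a+1/2}(1-x)^{N+1/2},
\]
whose $L^1$ norm is at most $(N+1)\int_0^1(1-x)^N\dd x=1$. So $\sup_N\|R_{a,N}\|_{L^1}<\infty$, and summability of $\delta_N$ gives absolute $L^1$ convergence.

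On compact subsets of $(0,1)$, the functions $F_{a,N}$ and $F_{a,N}'$ differ from their limits by $O\bigl((N+1)(1-x)^{N}\bigr)$. This gives locally uniform convergence of the series and of their termwise derivatives. Finally, applying $aW-xW'$ termwise yields \eqref{eq:R-basis}.
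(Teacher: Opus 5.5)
\textbf{Verdict: genuine gap.} Your setup is correct and matches the paper's. Since $(j+\tfrac12)\nu_j=(1/2)_{j+1}/(a+1)_j$, your guessed $e_j$ is exactly the paper's $t_j=S_j/\bigl((j+\tfrac12)\nu_j\bigr)$ with $S_j=\sum_{k>j}\nu_k/k$. Your recurrence $(j+\tfrac12)e_j=(a+j)e_{j-1}-1/j$ with $e_0=-2m_a$ is the coefficient form of the paper's equation for $\tau_a$. Your displayed ODE has a sign slip: it should read $\tfrac12 xE_a-(a+\tfrac12)yE_a+xy\,\tfrac{\dd}{\dd y}E_a=\log x-m_a$. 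This is harmless, because the recurrence you state is the right one. The $L^1$ and locally uniform convergence arguments are also fine. The gap is at the one nontrivial point, the strict decrease $e_{j+1}<e_j$, i.e.\ $\delta_N>0$.

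\textbf{Why the termwise comparison fails.} The inequality you call elementary is false. For fixed $j$, the left side $\frac{k(k+1/2)}{(k+1)(a+k+1)}$ tends to $1$ as $k\to\infty$. The right side $\frac{j+3/2}{j+a+1}$ is $<1$ for $a>\tfrac12$. For instance, with $a=1$, $j=0$, $k=10$ you get $105/132>3/4$. No term-by-term ratio argument can work, because consecutive ratios of $\nu_k/k$ approach $1$. The needed inequality $S_{j+1}<\frac{j+3/2}{a+j+1}S_j$ is a statement about the whole tail. Your fallback (``an Abel-summation comparison'') is not specified, so the positivity of $\delta_N$ is left unproved.

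\textbf{The missing idea.} What is missing is a telescoping tail bound. From \eqref{eq:nu-recurrence} one has $\nu_{k-1}-\nu_k=\frac{a+1/2}{k-1/2}\nu_k$. Hence
\[
\frac{\nu_k}{k}\le\frac{\nu_k}{k-1/2}=\frac{\nu_{k-1}-\nu_k}{a+1/2}.
\]
Summing over $k>j$ gives $S_j\le\nu_j/(a+\tfrac12)$, that is,
\[
0<e_j\le\frac1{(j+1/2)(a+1/2)}.
\]
This bound also gives $e_j\to0$ directly, with no need for the $\asymp$ asymptotics. Now write your recurrence backward:
\[
e_j=\frac{j+3/2}{a+j+1}\,e_{j+1}+\frac1{(j+1)(a+j+1)}.
\]
This yields
\[
\delta_j=e_j-e_{j+1}=\frac1{(j+1)(a+j+1)}-\frac{a-1/2}{a+j+1}\,e_{j+1}.
\]
The right side is positive because
\[
(a-\tfrac12)\,e_{j+1}\le\frac{a-1/2}{(j+3/2)(a+1/2)}<\frac1{j+1}.
\]
With this inserted, the rest of your argument goes through.
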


\begin{proof}
Suppress $a$ in the beta moments and define
\begin{equation}\label{eq:t-tail}
 S_j:=\sum_{k\ge j+1}\frac{\nu_k}{k},\qquad
 t_j:=\frac{S_j}{(j+1/2)\nu_j},\qquad j\ge0.
\end{equation}
The recurrence \eqref{eq:nu-recurrence} gives $\nu_j\le1/(j+1)$, so
these sums are finite. It also gives
\[
 \frac{\nu_{j+1}}{j+1}
 \le\frac{\nu_j-\nu_{j+1}}{a+1/2}.
\]
Summing this telescoping inequality proves
\begin{equation}\label{eq:t-bound}
 0<t_j\le\frac1{(j+1/2)(a+1/2)},\qquad t_j\longrightarrow0.
\end{equation}
By the nonnegative series $-\log x=\sum_{k\ge1}(1-x)^k/k$,
Tonelli's theorem gives $t_0=-2m_a$.
Separating the first term of $S_j$ and using the moment recurrence gives
\begin{equation}\label{eq:t-backward}
 t_j=\frac{j+3/2}{a+j+1}t_{j+1}
       +\frac1{(j+1)(a+j+1)}.
\end{equation}
Consequently, the numbers
\begin{equation}\label{eq:delta-definition}
 \delta_j(a):=t_j-t_{j+1}
 =\frac1{(j+1)(a+j+1)}-\frac{a-1/2}{a+j+1}t_{j+1}
\end{equation}
are positive: \eqref{eq:t-bound} implies
$(a-1/2)t_{j+1}\le(a-1/2)/((j+3/2)(a+1/2))<1/(j+1)$.
Also, $\sum_j\delta_j(a)=t_0<\infty$.

Let $\tau_a(y)=\sum_{j\ge0}t_jy^j$, analytic for $|y|<1$.
The recurrence \eqref{eq:t-backward} and $t_0=-2m_a$ give
\[
 y(1-y)\tau_a'(y)+\bigl(1/2-(a+1)y\bigr)\tau_a(y)
 =\log(1-y)-m_a.
\]
It follows by differentiation that
\begin{equation}\label{eq:W-tau}
 W_a(x)=\frac1{Z_a}x^{a+1/2}(1-x)^{1/2}\tau_a(1-x):
\end{equation}
both sides have derivative $-\omega_a(x)(\log x-m_a)$ and vanish at $x=1$.
Since $t_j=\sum_{N\ge j}\delta_N(a)$, Tonelli's theorem gives
\begin{equation}\label{eq:Abel-tau}
 \tau_a(y)=\sum_{N\ge0}\delta_N(a)\sum_{j=0}^Ny^j,
 \qquad 0<y<1.
\end{equation}
The finite geometric sum now proves \eqref{eq:W-basis}.

Direct differentiation gives
\begin{equation}\label{eq:R-basis-explicit}
 R_{a,N}(x)=x^{a-1/2}(1-x)^{-1/2}r_N(1-x),
\end{equation}
where
\begin{equation}\label{eq:rN}
 \begin{split}
 r_N(y)&=\frac12-\left(N+\frac32\right)y^{N+1}+(N+1)y^{N+2}\\
       &=\frac12(1-y^{N+1})-(N+1)(1-y)y^{N+1}.
 \end{split}
\end{equation}
Thus $|r_N|\le3/2$ on $[0,1]$, and the summability of $\delta_N(a)$
proves absolute $L^1$ convergence of the series in \eqref{eq:R-basis}.
On every compact subinterval of $(0,1)$, the quantities
$(N+2)^2(1-x)^N$ are bounded uniformly in $N$. The formulas for
$F_{a,N}$ and $R_{a,N}$ therefore bound them and their derivatives uniformly
there. The Weierstrass test justifies local termwise differentiation,
proving \eqref{eq:R-basis} and all asserted convergence properties.
\end{proof}

\subsection{The finite-basis inequality}

We next prove that the transform is positive on every basis element in the Abel decomposition.

\begin{lem}\label{lem:finite-basis}
For $a\ge1$, an integer $N\ge0$, and $0<c<1$, set
\begin{equation}\label{eq:J-definition}
  \mathcal J_{a,N}(c)
  :=\frac1{Z_a}\mathcal T_c R_{a,N}.
\end{equation}
Then
\begin{equation}\label{eq:J-positive}
  \mathcal J_{a,N}(c)>0.
\end{equation}
\end{lem}

\begin{proof}
Put $d=1-c$ and suppress $a$ in $\nu_m$ and $L_a(c)$ when convenient.
Polynomial division gives
\begin{equation}\label{eq:power-transform}
 \mathcal T_c[\omega_a(x)(1-x)^m]
 =d^{m-1}L_a(c)-\sum_{k=0}^{m-1}d^k\nu_{m-1-k},
\end{equation}
with the sum empty for $m=0$. Substituting \eqref{eq:rN} and applying
\eqref{eq:nu-recurrence} to the finite sums yields
\[
 \mathcal J_{a,N}(c)=L_a(c)C_N(c)+aU_N(a,c)+cV_N(a,c),
\]
where
\begin{align}
 C_N(c)&=\frac1{2d}-\left(\frac12+c(N+1)\right)d^N,
 \label{eq:C-N}\\
 U_N(a,c)&=\sum_{k=0}^Nd^k\nu_{N+1-k},\qquad
 V_N(a,c)=\sum_{k=1}^{N+1}k d^{k-1}\nu_{N+1-k}.
 \label{eq:U-V-N}
\end{align}

We first record two bounds used in both parts of the induction.
Since $t^2\le t$ on $[0,1]$, Lemma~\ref{lem:beta-transform} gives
\begin{equation}\label{eq:L-global-upper}
 L_a(c)\le2ac^a+2ad\int_0^1(c+dt)^{a-1}\,\dd t
 =2+2(a-1)c^a.
\end{equation}
If $0<c<1/2$ and $b>0$, then $\log c<c-1<-1/2$ and
$b\le\e^{b/2}<c^{-b}$, where the first inequality follows from
$2u\le\e^u$. Including $b=0$, we obtain $bc^b<1$. Hence
\begin{equation}\label{eq:a-c-bound}
 (a-1)c^a<c,\qquad L_a(c)<2+2c\quad(0<c<1/2).
\end{equation}

\smallskip\noindent\emph{Base case.}
Taking $N=0$ in the preceding finite sums gives
\begin{equation}\label{eq:J0}
 \mathcal J_{a,0}(c)=\frac{c(2c-1)}{2d}L_a(c)+a\nu_1+c.
\end{equation}
For $c\ge1/2$, this is positive. For $c<1/2$, the coefficient of $L_a(c)$
is negative, so \eqref{eq:a-c-bound} gives
\begin{equation}\label{eq:J0-positive}
 \mathcal J_{a,0}(c)>\frac{2c^3}{d}+a\nu_1>0.
\end{equation}

\smallskip\noindent\emph{Induction step.}
Set $m=N+1$ and $\Sigma_m=\sum_{k=0}^m d^k\nu_{m-k}\ge d^m$.
Subtracting the finite sums in \eqref{eq:C-N}--\eqref{eq:U-V-N} gives
\begin{equation}\label{eq:J-recurrence}
 \mathcal J_{a,N+1}(c)-d\mathcal J_{a,N}(c)
 =a\nu_{m+1}+c\Sigma_m+cL_a(c)\left(\frac1{2d}-d^m\right).
\end{equation}
If $\xi:=d^{m+1}\le1/2$, the right side is positive.
If $\xi>1/2$, put $y=(1-\xi)/\xi\in(0,1)$ and
$\rho=1-1/(2\xi)=(1-y)/2$.
Since $\xi\le d$, we have $c<1/2$ and $y\ge c/d>c$.
Equations \eqref{eq:L-global-upper}--\eqref{eq:a-c-bound} imply
\[
 L_a(c)\rho<(2+2y)\frac{1-y}{2}=1-y^2<1.
\]
Using $\Sigma_m\ge d^m$ in \eqref{eq:J-recurrence}, we conclude that
\[
 \mathcal J_{a,N+1}(c)-d\mathcal J_{a,N}(c)
 \ge a\nu_{m+1}+cd^m\bigl(1-L_a(c)\rho\bigr)>0.
\]
Together with the base case, this proves the lemma by induction.
\end{proof}

\subsection{Passage to the principal value}

The preceding estimates allow us to pass to the principal value and complete the range $a\ge1$.

\begin{prop}\label{prop:large-positivity}
For every $a\ge1$ and $0<c<1$,
\begin{equation}\label{eq:T-sum}
  \mathcal T_c R_a
  =\sum_{N\ge0}\delta_N(a)\mathcal J_{a,N}(c)>0.
\end{equation}
Consequently,
\begin{equation}\label{eq:large-kernel-positive}
  \Rea H_a(z)>0,
  \qquad z\in\D.
\end{equation}
\end{prop}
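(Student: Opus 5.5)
The proof has two parts. First, interchange the transform $\mathcal T_c$ with the Abel series of Lemma~\ref{lem:abel-decomposition}; positivity then follows termwise. Second, transfer positivity from the boundary arc to all of $\D$.

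\emph{Interchange of $\mathcal T_c$ and the series.} Fix $c\in(0,1)$ and use the subtraction formula \eqref{eq:PV-subtraction}. It writes
\[
\mathcal T_cf=\int_0^1\frac{f(x)-f(c)}{x-c}\,\dd x+f(c)\Bigl(\log\frac{d}{c}+\pi\sqrt{c/d}\Bigr).
\]
This is a linear functional in two pieces, and I treat each separately.

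The point term $f(c)$ passes through the series by the locally uniform convergence in Lemma~\ref{lem:abel-decomposition}.

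For the difference-quotient integral, split $[0,1]$ into a neighborhood $I=[c-\eta,c+\eta]$ and its complement.
\begin{itemize}
\item On $I$, the mean value theorem bounds each quotient by $\sup_I|R_{a,N}'|$. The locally uniform convergence of the termwise derivatives then gives convergence of $\sum_N\delta_N\sup_I|R'_{a,N}|/Z_a$ after reordering. More precisely, I will use the explicit form \eqref{eq:R-basis-explicit}, where $(N+2)^2(1-x)^N$ is bounded on $I$, to get a summable majorant $C\delta_N$.
\item Off $I$, $1/|x-c|\le1/\eta$, and the absolute $L^1$ convergence of $\sum\delta_N R_{a,N}$ suffices.
\end{itemize}
Dominated convergence (Fubini for the sum) yields
\[
\mathcal T_cR_a=\frac1{Z_a}\sum_N\delta_N\,\mathcal T_cR_{a,N}=\sum_N\delta_N\mathcal J_{a,N}(c).
\]

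\emph{Positivity on $\T$.} By Lemma~\ref{lem:finite-basis} each $\mathcal J_{a,N}(c)>0$, and by \eqref{eq:delta-properties} each $\delta_N>0$. So the sum is strictly positive, since its $N=0$ term alone is positive, and \eqref{eq:T-sum} follows.

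\emph{Extension to $\D$.} Proposition~\ref{prop:boundary-representation} gives $\Rea H_a(\e^{i\theta})=d\,\mathcal T_cR_a>0$ for $0<\theta<\pi$. Since the $\alpha_k$ are real, $H_a$ has real Taylor coefficients, so $\Rea H_a(\e^{-i\theta})=\Rea H_a(\e^{i\theta})>0$. Hence $\Rea H_a>0$ on $\T\setminus\{\pm1\}$.

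$H_a$ is continuous on $\overline\D$ by the absolute convergence noted after \eqref{eq:H-definition}. Therefore $\Rea H_a$ is harmonic in $\D$, continuous on $\overline\D$, and nonnegative on $\T$ (the two excluded points having measure zero). The Poisson representation gives $\Rea H_a(z)=\int P_z\,\Rea H_a\,\dd m>0$, because the boundary function is strictly positive on a set of full measure and $P_z>0$. This proves \eqref{eq:large-kernel-positive}.

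\emph{Main obstacle.} The delicate step is the uniform control near $c$ that justifies exchanging the principal value with the infinite series. I expect the explicit $r_N$ formula, with the derivative bound of order $(N+2)^2(1-c+\eta)^N$, to handle it. The possible issue at $c$ near $1$ is avoided by choosing $\eta<\min(c,d)/2$.
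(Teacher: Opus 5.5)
Your proposal is correct and follows the paper's proof. The paper likewise uses dominated convergence in the subtraction formula with a summable bound coming from $(N+2)^2(1-x)^N$, gets termwise positivity from Lemma~\ref{lem:finite-basis}, and passes from the upper semicircle to $\D$ by real coefficients and harmonicity. The differences are minor: you split off a neighborhood of $c$ where the paper factors out $\omega_a$, and you get strict positivity from the Poisson integral where the paper uses the minimum principle together with $H_a(0)=1/(a+1)>0$.
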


\begin{proof}
Write $p_N(x)=r_N(1-x)$, so that $R_{a,N}/Z_a=\omega_a p_N$.
We have $|p_N|\le3/2$ on $[0,1]$. Their derivatives are uniformly bounded
on $[c/2,1]$, because $(N+2)^2(1-c/2)^N$ is bounded. The mean-value theorem
on that interval, and $|x-c|\ge c/2$ on $(0,c/2)$, therefore give a constant
$K_c$ independent of $N$ such that
\[
 \left|\frac{p_N(x)-p_N(c)}{x-c}\right|\le K_c.
\]
The identity
\[
 \frac{\omega_a(x)p_N(x)-\omega_a(c)p_N(c)}{x-c}
 =p_N(x)\frac{\omega_a(x)-\omega_a(c)}{x-c}
  +\omega_a(c)\frac{p_N(x)-p_N(c)}{x-c}
\]
gives the integrable majorant
$\frac32|\omega_a(x)-\omega_a(c)|/|x-c|+\omega_a(c)K_c$.
Since $\sum_N\delta_N(a)<\infty$, Fubini applies to the difference-quotient
series in \eqref{eq:PV-subtraction}; local uniform convergence handles
the value and residue terms. Thus
\[
 \mathcal T_c R_a
 =\frac1{Z_a}\sum_{N\ge0}\delta_N(a)\mathcal T_cR_{a,N}
 =\sum_{N\ge0}\delta_N(a)\mathcal J_{a,N}(c)>0
\]
by Lemmas~\ref{lem:abel-decomposition} and~\ref{lem:finite-basis}.

Proposition~\ref{prop:boundary-representation} gives $\Rea H_a>0$ on the
upper open semicircle; the real Taylor coefficients give the same on
the lower one. Continuity on $\overline\D$ gives nonnegative values at
$1$ and $-1$. The harmonic minimum principle now gives $\Rea H_a\ge0$
in $\D$. Equality at an interior point would force $\Rea H_a$ to be
identically zero, contradicting $H_a(0)=1/(a+1)>0$.
\end{proof}

Combining Proposition~\ref{prop:large-positivity} with Section~\ref{sec:small} completes the proof of Theorem~\ref{thm:kernel}.
\section{Completion of the proof}\label{sec:completion}

We first assemble the preceding reductions under the simple-zero hypothesis, and then remove that hypothesis by approximation.

\begin{prop}\label{prop:positive-exponents}
Let $p$ satisfy the normalization \eqref{eq:normalized-p}, assume that its zeros on $\T$ are simple, and set $Q_n(z):=1+z^n$. Then
\begin{equation}\label{eq:Fp-definition}
  F_p(r):=\log\frac{\norm{p}_r}{\norm{Q_n}_r}
\end{equation}
is nondecreasing for $r>0$.
\end{prop}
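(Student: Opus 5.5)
The monotonicity of $F_p$ on $(0,\infty)$ will follow by chaining the reductions of Sections~\ref{sec:reduction}--\ref{sec:large}. Fix $r>0$ and write $r=2a$ with $a>0$. Since $p$ satisfies \eqref{eq:normalized-p} and has simple zeros, Proposition~\ref{prop:polar} applies. It supplies the polar factor $q$, zero-free on $\overline\D$, and the finite Blaschke product $B$ with $B(0)=0$, $B_*m=m$ and $p=q(1+B)$. From $q$ we form $g=q^a$ as in \eqref{eq:g-definition}. Proposition~\ref{prop:area-identity} then expresses $\mathcal P_{2a}(q,B)$ as the positive constant $M_{Q_n}(2a)/(an)$ times $\int_{\D}\Rea H_a(B(z))\,|g'(z)|^2\,\dd A(z)$.

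Theorem~\ref{thm:kernel} gives $\Rea H_a>0$ on $\D$. This was proved in Section~\ref{sec:small} for $0<a\le1$ and in Proposition~\ref{prop:large-positivity} for $a\ge1$. Since $|B(z)|<1$ for $z\in\D$ (Schwarz, as noted in the proof of Proposition~\ref{prop:polar}), the integrand $\Rea H_a(B(z))\,|g'(z)|^2$ is nonnegative on $\D$. Hence $\mathcal P_r(q,B)\ge0$, which is exactly \eqref{eq:polar-target}. Proposition~\ref{prop:entropy-reduction} adds the automatically nonnegative term $\Ent_{\mu_r}(|q|^r)$ and yields the entropy comparison \eqref{eq:entropy-comparison} at this $r$.

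Because $r>0$ was arbitrary, \eqref{eq:entropy-comparison} holds for every $r>0$. Lemma~\ref{lem:entropy-derivative} then shows that $F_p'(r)=\bigl(\mathsf D(\rho_{p,r}\Vert m)-\mathsf D(\rho_{Q_n,r}\Vert m)\bigr)/r^2\ge0$ on $(0,\infty)$. The lemma also shows $F_p$ is differentiable there, so the mean value theorem gives that $F_p$ is nondecreasing.

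The only point needing care is that every earlier result is invoked under its exact hypotheses. Proposition~\ref{prop:entropy-reduction} needs $p=q(1+B)$ with $q$ bounded away from zero, and the simple-zero assumption guarantees this. The identity of Proposition~\ref{prop:area-identity} needs $a>0$, and the kernel theorem covers all $a>0$. With these hypotheses checked, the result is a direct assembly.
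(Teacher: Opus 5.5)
Your proposal is correct and follows the paper's proof exactly: you combine the area identity with kernel positivity to get $\mathcal P_{2a}(q,B)\ge0$, pass through the entropy splitting to the entropy comparison, and conclude $F_p'(r)\ge0$ from Lemma~\ref{lem:entropy-derivative}. Your extra remarks, that $B(\D)\subset\D$ and that the mean value theorem converts $F_p'\ge0$ into monotonicity, only make explicit what the paper's three-line proof leaves implicit.
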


\begin{proof}
Fix $r=2a>0$. Theorem~\ref{thm:kernel} and the area identity \eqref{eq:area-identity} give
$\mathcal P_{2a}(q,B)\ge0$. Proposition~\ref{prop:entropy-reduction} therefore yields
\[
  \mathsf D(\rho_{p,r}\Vert m)
  \ge\mathsf D(\rho_{Q_n,r}\Vert m).
\]
Lemma~\ref{lem:entropy-derivative} now gives $F_p'(r)\ge0$.
\end{proof}

To remove the simple-zero hypothesis, we perturb the arguments of the roots while preserving their product.

\begin{lem}\label{lem:simple-approximation}
Let $p$ satisfy \eqref{eq:normalized-p}, with arbitrary multiplicities among its zeros on $\T$. Then there are polynomials $p_\nu$ satisfying the same normalization, each with simple zeros on $\T$, such that $p_\nu\to p$ coefficientwise and uniformly on $\T$.
\end{lem}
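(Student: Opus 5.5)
Write the zeros of $p$ as $\zeta_j=\e^{i\theta_j}$, $j=1,\ldots,n$, listed with multiplicity. The plan is to perturb the angles $\theta_j$ to distinct values while keeping their sum fixed, and then to rebuild the normalization directly from the perturbed roots.

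\emph{Reading off the normalization.} Since $p$ is monic with $p(0)=1$, we have $(-1)^n\prod_j\zeta_j=1$. Equivalently, $\sum_j\theta_j\equiv n\pi \pmod{2\pi}$.

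\emph{The perturbation.} Choose real numbers $\varepsilon_{\nu,1},\ldots,\varepsilon_{\nu,n}$ with $\sum_j\varepsilon_{\nu,j}=0$ and $\max_j|\varepsilon_{\nu,j}|\to0$. Choose them so that the angles $\theta_j+\varepsilon_{\nu,j}$ are pairwise distinct modulo $2\pi$. This is possible: the set of $\varepsilon$ in the hyperplane $\{\sum_j\varepsilon_j=0\}$ for which some pair of angles collides modulo $2\pi$ is a countable union of affine hyperplanes of the form $\theta_j+\varepsilon_j-\theta_k-\varepsilon_k\in2\pi\mathbb Z$. Each of these is proper when $n\ge2$, because the functional $\varepsilon_j-\varepsilon_k$ is not constant on $\{\sum_j\varepsilon_j=0\}$. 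Hence generic small $\varepsilon$ avoid all of them. For $n=1$ there is nothing to prove.

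\emph{Rebuilding the polynomials.} Set
\[
p_\nu(z)=\prod_{j=1}^n\bigl(z-\e^{i(\theta_j+\varepsilon_{\nu,j})}\bigr).
\]
This polynomial is monic, and its zeros are simple and lie on $\T$. Its constant term is $(-1)^n\prod_j\e^{i\theta_j}\,\e^{i\sum_j\varepsilon_{\nu,j}}=p(0)=1$. The argument in the proof of Lemma~\ref{lem:normalization} then gives $p_\nu^*=\overline{p_\nu(0)}\,p_\nu=p_\nu$. So $p_\nu$ satisfies \eqref{eq:normalized-p}.

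\emph{Convergence.} The coefficients of $p_\nu$ are elementary symmetric functions of the roots, which are continuous in the roots. They therefore converge to the coefficients of $p$. Since the degree is fixed at $n$, coefficientwise convergence implies uniform convergence on $\T$, via the bound $\sup_{\T}|p_\nu-p|\le\sum_k|a_k^{(\nu)}-a_k|$.

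I expect no real obstacle. The only point needing care is keeping the sum of the angle perturbations exactly zero, so that the constant term stays equal to $1$ and self-inversivity is preserved.
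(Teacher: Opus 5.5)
Your proof is correct and follows the paper's argument: you perturb the root arguments by a vector with zero sum, so the constant term stays $1$, you recover self-inversivity from the unimodular-root identity $p^*=\overline{p(0)}\,p$, and you get convergence from continuity of the coefficients in the roots. The only difference is that you ensure distinct roots by a genericity argument in the hyperplane $\sum_j\varepsilon_j=0$, whereas the paper uses the explicit direction $\varepsilon v_j$ with $v_j=j-(n+1)/2$, taking equal arguments for repeated roots, which separates all roots for every sufficiently small $\varepsilon>0$.
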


\begin{proof}
List the roots as $\e^{i\theta_1},\ldots,\e^{i\theta_n}$, choosing the same
argument for repeated roots. Set $v_j=j-(n+1)/2$, so that $\sum_jv_j=0$,
and define
\[
 p_\varepsilon(z)=\prod_{j=1}^n
 \left(z-\e^{i(\theta_j+\varepsilon v_j)}\right).
\]
For sufficiently small $\varepsilon>0$, each pair of distinct original
roots remains distinct by continuity. A repeated pair separates because
$0<\varepsilon|v_j-v_k|<2\pi$. There are only finitely many pairs, so a
single $\varepsilon_0>0$ makes all roots simple for $0<\varepsilon<\varepsilon_0$.
The identity $\sum_jv_j=0$ preserves the product of the roots and hence the
constant coefficient. Thus $p_\varepsilon$ is monic with constant coefficient
$1$ and unimodular zeros, which also gives $p_\varepsilon=p_\varepsilon^*$.
Its coefficients tend to those of $p$; their absolute differences bound
$\sup_{\overline\D}|p_\varepsilon-p|$. Taking any
$\varepsilon_\nu\downarrow0$ below $\varepsilon_0$ proves the lemma.
\end{proof}

We can now prove Theorem~\ref{thm:main}.

\begin{proof}
First let $p$ satisfy \eqref{eq:normalized-p}, and take the approximants
$p_\nu$ from Lemma~\ref{lem:simple-approximation}. For every fixed $r>0$,
uniform convergence on $\T$ and the bound $|p_\nu|\le2^n$ give
$\norm{p_\nu}_r\to\norm p_r$ by dominated convergence. Passing to the limit
in Proposition~\ref{prop:positive-exponents} therefore proves
\[
 \frac{\norm p_s}{\norm{Q_n}_s}\le
 \frac{\norm p_t}{\norm{Q_n}_t},\qquad 0<s\le t<\infty.
\]

For the endpoint $0$, let $X=\log|p|$ off the finite zero set, and set
$X=0$ on that null set. The local logarithmic singularities are integrable,
so $X\in L^1(\T,m)$ and $\e^{rX}=|p|^r$ almost everywhere.
With $M=\max\{1,\norm p_\infty\}$, the mean-value theorem gives
\[
 \left|\frac{\e^{rX}-1}{r}\right|\le M|X|,\qquad 0<r\le1.
\]
Dominated convergence yields
$M_p(r)=1+r\int X\,\dd m+o(r)$; taking logarithms and dividing by $r$ gives
$\norm p_r\to\exp\int X\,\dd m=\norm p_0$.
The same holds for $Q_n$, and Jensen's formula gives
$\norm p_0=\norm{Q_n}_0=1$.

For the endpoint $\infty$, put $S=\max_{\T}|p|$. If $0<b<S$, continuity
gives $m\{|p|>b\}>0$, and
\[
 b\,m\{|p|>b\}^{1/r}\le\norm p_r\le S.
\]
Letting first $r\to\infty$ and then $b\uparrow S$ proves
$\norm p_r\to S=\norm p_\infty$; the same applies to $Q_n$.
The corresponding one-sided limits therefore extend the comparison to
every $0\le s\le t\le\infty$.

Finally, Lemma~\ref{lem:normalization} writes the normalized polynomial as
$p(z)=cP(\eta z)$, where $c\ne0$ and $|\eta|=1$. Rotation preserves every
mean and multiplication by $c$ scales them all by $|c|$, so the comparison
also holds for the original polynomial $P$.
\end{proof}
\section{Consequences and neighboring extremal problems}\label{sec:applications}

For $0<r<\infty$, set
\begin{equation}\label{eq:Cr-definition}
  C_r:=\norm{1+z}_r,
  \qquad
  C_0:=1,
  \qquad
  C_\infty:=2.
\end{equation}
Since $z\mapsto z^n$ preserves the normalized Haar measure $m$, one has $\norm{Q_n}_r=C_r$. The beta integral gives
\[
  C_r^r
  =\frac1{2\pi}\int_0^{2\pi}|1+\e^{i\theta}|^r\,\dd\theta
  =\frac{\Gamma(r+1)}{\Gamma(1+r/2)^2}.
\]
In particular,
\[
  C_1=\frac4\pi,
  \qquad C_2=\sqrt2,
  \qquad C_4=6^{1/4}.
\]

\subsection{Reverse norm and coefficient inequalities}

Theorem~\ref{thm:main} first yields the following sharp family of norm and coefficient inequalities.

\begin{cor}\label{cor:applications}
Let
\begin{equation}\label{eq:p-coefficients}
  p(z)=\sum_{k=0}^na_kz^k
  =a_n\prod_{j=1}^n(z-\zeta_j),
  \qquad |\zeta_j|=1,
\end{equation}
be a nonzero polynomial of degree $n\ge1$. Let $C_r$ be defined by \eqref{eq:Cr-definition}.

\begin{enumerate}[label=\textnormal{(\roman*)},leftmargin=*,itemsep=1ex]
\item For every $0\le s\le t\le\infty$,
\begin{equation}\label{eq:reverse-norm}
  \norm{p}_t\ge\frac{C_t}{C_s}\norm{p}_s.
\end{equation}
The constant $C_t/C_s$ is sharp.

\item For every $0\le r\le\infty$,
\begin{equation}\label{eq:visser-Lr}
  |a_0|+|a_n|
  \le\frac2{C_r}\norm{p}_r.
\end{equation}
Thus the constants for $r=1,2,\infty$ are $\pi/2$, $\sqrt2$, and $1$, respectively.

\item One has
\begin{equation}\label{eq:OHR}
  \norm{p}_\infty^2
  \ge2\norm{p}_2^2
  =2\sum_{k=0}^n|a_k|^2.
\end{equation}
More generally, if $m$ is a positive integer and
$
  p(z)^m=\sum_{\ell=0}^{mn}b_\ell z^\ell,
$
then
\begin{equation}\label{eq:higher-power-OHR}
  \norm{p}_\infty^{2m}
  \ge\frac{4^m}{\binom{2m}{m}}
  \sum_{\ell=0}^{mn}|b_\ell|^2.
\end{equation}

\item For every integer $m\ge1$,
\begin{equation}\label{eq:even-moment}
  \int_{\T}|p|^{2m}\,\dd m
  \ge\frac{\binom{2m}{m}}{2^m}
  \left(\int_{\T}|p|^2\,\dd m\right)^m.
\end{equation}
For $m=2$, this is the convolution inequality
\begin{equation}\label{eq:convolution}
  \sum_{\ell=0}^{2n}
  \left|\sum_{\substack{u+v=\ell\\0\le u,v\le n}}a_ua_v\right|^2
  \ge\frac32\left(\sum_{k=0}^n|a_k|^2\right)^2.
\end{equation}
\end{enumerate}

The constants in \textnormal{(i)}--\textnormal{(iv)} are attained by
$p(z)=c(z^n-\omega)$ with $c\ne0$ and $|\omega|=1$.
\end{cor}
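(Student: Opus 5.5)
All four parts follow from Theorem~\ref{thm:main} applied to $p$ and $Q_n$, combined with a few elementary identities.

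For (i), $\norm{Q_n}_r=C_r$ (the measure $m$ is invariant under $z\mapsto z^n$). Substituting this into \eqref{eq:main} and rearranging gives \eqref{eq:reverse-norm}. For sharpness, take $p(z)=c(z^n-\omega)$. Then $|p(\zeta)|=|c|\,|1-\bar\omega\zeta^n|$, and this function has the same distribution on $\T$ as $|c||Q_n|$, because both rotation and $\zeta\mapsto\zeta^n$ preserve $m$. Hence $\norm{p}_r=|c|C_r$ for every $r$, so equality holds in (i) and in every specialization below.

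For (ii), take $s=0$ in (i). Jensen's formula, with all zeros on $\T$, gives $\norm{p}_0=|a_n|\prod_j|\zeta_j|=|a_n|$. Also $|a_0|=|a_n|\prod|\zeta_j|=|a_n|$. Therefore $|a_0|+|a_n|=2\norm{p}_0\le 2\norm{p}_r/C_r$. The listed constants follow from $C_1=4/\pi$, $C_2=\sqrt2$ and $C_\infty=2$.

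For (iii), take $(s,t)=(2,\infty)$. Then $\norm{p}_\infty\ge (2/\sqrt2)\norm{p}_2$, and Parseval turns this into \eqref{eq:OHR}. For the higher-power version, take $(s,t)=(2m,\infty)$. This gives $\norm{p}_\infty^{2m}\ge (2^{2m}/C_{2m}^{2m})\norm{p}_{2m}^{2m}$. Here $C_{2m}^{2m}=\Gamma(2m+1)/\Gamma(m+1)^2=\binom{2m}{m}$, and $\norm{p}_{2m}^{2m}=\norm{p^m}_2^2=\sum_\ell|b_\ell|^2$ by Parseval.

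For (iv), take $(s,t)=(2,2m)$. This gives $\norm{p}_{2m}^{2m}\ge (C_{2m}^{2m}/C_2^{2m})\norm{p}_2^{2m}=\binom{2m}{m}2^{-m}\norm{p}_2^{2m}$, which is \eqref{eq:even-moment}. At $m=2$ the constant is $6/4=3/2$. Writing $\norm{p}_4^4=\norm{p^2}_2^2$ and expanding the coefficients of $p^2$ by Parseval gives \eqref{eq:convolution}.

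There is no real obstacle. The steps needing the most care are the Jensen identification $\norm{p}_0=|a_n|=|a_0|$ and the gamma evaluation of $C_{2m}^{2m}$, both of which are routine.
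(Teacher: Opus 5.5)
Your proposal is correct and follows the paper's proof essentially line by line. You specialize Theorem~\ref{thm:main} to $(s,t)=(0,r)$, $(2,\infty)$, $(2m,\infty)$ and $(2,2m)$, using Jensen's formula for $\norm{p}_0=|a_n|=|a_0|$, $C_{2m}^{2m}=\binom{2m}{m}$, and Parseval. Sharpness comes from $\norm{c(z^n-\omega)}_r=|c|C_r$ by rotation invariance.
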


\begin{proof}
Equation \eqref{eq:reverse-norm} is Theorem~\ref{thm:main}, because $\norm{Q_n}_r=C_r$.

For \eqref{eq:visser-Lr}, Jensen's formula and \eqref{eq:p-coefficients} give
\[
  \norm{p}_0=|a_n|,
  \qquad |a_0|=|a_n|.
\]
Apply \eqref{eq:reverse-norm} with $(s,t)=(0,r)$. The case $r=\infty$ agrees with Visser's classical inequality \cite[p.~279, Theorem~3]{Vis45}.

Taking $(s,t)=(2,\infty)$ proves \eqref{eq:OHR}; this is the O'Hara--Rodriguez inequality \cite[p.~333, Corollary~1]{OR74}. Taking $(s,t)=(2m,\infty)$ and using
$C_{2m}^{2m}=\binom{2m}{m}$ gives \eqref{eq:higher-power-OHR}, because
\[
  \norm{p}_{2m}^{2m}=\norm{p^m}_2^2
  =\sum_{\ell=0}^{mn}|b_\ell|^2.
\]
Finally, $(s,t)=(2,2m)$ gives \eqref{eq:even-moment}; Parseval's identity applied to $p^2$ gives \eqref{eq:convolution}.

For $p(z)=c(z^n-\omega)$, rotation invariance gives $\norm{p}_r=|c|C_r$ for every $0\le r\le\infty$. Hence all constants asserted in the corollary are attained.
\end{proof}

\subsection{Erd\H{o}s--Szekeres products}

Applying the $L^2$-to-$L^\infty$ comparison to a product of cyclotomic factors gives the following consequence.

\begin{cor}\label{cor:erdos-szekeres}
Let $N\ge1$ and let $s_1,\ldots,s_N$ be positive integers. Define
$
  P_N(z):=\prod_{j=1}^N(1-z^{s_j})=\sum_k b_k z^k.
$
Then
\begin{equation}\label{eq:ES-energy}
  \norm{P_N}_\infty^2
  \ge2\sum_k |b_k|^2,
  \qquad
  \sum_k |b_k|^2\ge2N,
\end{equation}
and therefore
\begin{equation}\label{eq:ES-bound}
  \norm{P_N}_\infty\ge2\sqrt N.
\end{equation}
\end{cor}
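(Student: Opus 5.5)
The first inequality in \eqref{eq:ES-energy} is the O'Hara--Rodriguez inequality \eqref{eq:OHR} of Corollary~\ref{cor:applications}(iii). To apply it, we check that $P_N$ qualifies. It is a nonzero polynomial of degree $n=\sum_j s_j\ge1$, and every zero is an $s_j$-th root of unity, so all zeros lie on $\T$. Multiplicities are allowed by Theorem~\ref{thm:main}. Parseval then identifies $\norm{P_N}_2^2$ with $\sum_k|b_k|^2$. This gives $\norm{P_N}_\infty^2\ge2\sum_k|b_k|^2$ directly.

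The second inequality, $\sum_k|b_k|^2\ge2N$, is purely arithmetic. The plan is to invoke Tang's estimate \cite[Theorem~3.2]{Tan26}, as the introduction indicates. If a self-contained argument is wanted, I would proceed by induction on $N$.

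For $N=1$, the polynomial is $1-z^{s_1}$, with energy exactly $2$.

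For the inductive step, write $P_{N}=P_{N-1}(1-z^{s})$. Then
\[
 \norm{P_N}_2^2=2\norm{P_{N-1}}_2^2-2\Rea\sum_k b'_k\overline{b'_{k+s}},
\]
where $b'_k$ are the coefficients of $P_{N-1}$. The task is to control this autocorrelation term. One option is to order $s_1\le\cdots\le s_N$ and exploit integrality; another is to work with $\int_\T|P_{N-1}|^2\,(2-2\cos s\theta)\,\dd m$ directly. This step is the real obstacle. Energy can genuinely drop when a factor is appended, for example $(1-z)^2(1-z)$ versus $(1-z)^2$. So a naive induction will not close, and I would fall back on citing Tang's theorem for this part rather than reproving it.

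Combining the two inequalities gives $\norm{P_N}_\infty^2\ge2\cdot2N=4N$. Taking square roots yields \eqref{eq:ES-bound}.
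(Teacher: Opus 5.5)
Your argument is the paper's: \eqref{eq:OHR} applied to $P_N$ gives the first inequality and Tang's theorem gives the second. The paper additionally applies Tang's result to $zP_N\in\mathbb Z[z]$, checks that it has zero constant term and is divisible by $(1-z)^N$, and notes that the shift by $z$ leaves $\sum_k|b_k|^2$ unchanged, so you should verify these hypotheses rather than cite the estimate bare; separately, your side example is off, since $\norm{(1-z)^2}_2^2=6<20=\norm{(1-z)^3}_2^2$, whereas an actual drop is $\norm{(1-z)^6}_2^2=924$ versus $\norm{(1-z)^6(1-z^2)}_2^2=858$.
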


\begin{proof}
Every zero of $P_N$ lies on $\T$, so the first inequality in \eqref{eq:ES-energy} is \eqref{eq:OHR}. Moreover, $zP_N(z)\in\mathbb Z[z]$ has zero constant term and is divisible by $(1-z)^N$. Tang's theorem \cite[pp.~3384--3385, Theorem~3.2]{Tan26}, applied to $zP_N$, gives \[ \sum_k\left|[z^k]\bigl(zP_N(z)\bigr)\right|^2\ge2N, \] where $[z^k]F(z)$ denotes the coefficient of $z^k$ in the polynomial $F(z)$. Multiplication by $z$ only shifts the coefficient sequence, so \[ \sum_k\left|[z^k]\bigl(zP_N(z)\bigr)\right|^2 =\sum_k|b_k|^2. \] Combining this with the first inequality in \eqref{eq:ES-energy} proves \eqref{eq:ES-bound}.
\end{proof}

\subsection{A mixed derivative inequality}

The extremal derivative identity for unimodular-zero polynomials combines with Theorem~\ref{thm:main} as follows.

\begin{cor}\label{cor:derivative}
Let $p$ be as in \eqref{eq:p-coefficients}, and let $C_r$ be defined by \eqref{eq:Cr-definition}. Then, for every $0\le r\le\infty$,
\begin{equation}\label{eq:mixed-derivative}
  \norm{p'}_\infty
  =\frac n2\norm{p}_\infty
  \ge\frac n{C_r}\norm{p}_r.
\end{equation}
The constant $n/C_r$ is sharp. In particular,
\[
  \norm{p'}_\infty\ge n|a_n|,
  \qquad
  \norm{p'}_\infty\ge\frac{\pi n}{4}\norm{p}_1,
  \qquad
  \norm{p'}_\infty\ge\frac n{\sqrt2}\norm{p}_2.
\]
\end{cor}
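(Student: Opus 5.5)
The corollary has two parts. The first is the equality $\norm{p'}_\infty=\frac n2\norm{p}_\infty$. The second is the inequality $\frac n2\norm{p}_\infty\ge\frac n{C_r}\norm{p}_r$, which I will read off from Corollary~\ref{cor:applications}(i).

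\textbf{The equality.} I will first reduce to the normalized polynomial of Lemma~\ref{lem:normalization}. The substitution $P(z)=c^{-1}p(\eta^{-1}z)$ scales $\norm{P}_\infty$ and $\norm{P'}_\infty$ by the same factor $|c|^{-1}$, because $|\eta|=1$. So I may assume $p=p^*$.

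For a self-inversive polynomial of degree $n$, differentiating $p(z)=z^n\overline{p(1/\bar z)}$ gives the classical boundary identity
\[
 |p'(z)|+|(p^*)'(z)|\ldots,
\]
but a cleaner route is available from Proposition~\ref{prop:polar}. On $\T$ one has $\Dop p=nq^*$, hence $|p'|=n|q|$. Also $p=q+q^*$ with $|q^*|=|q|$ on $\T$, so $|p|\le 2|q|$ there. This gives $\norm{p}_\infty\le\frac2n\norm{p'}_\infty$.

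For the reverse inequality I will use the Lax inequality quoted in the introduction. Since $p$ has no zeros in $\D$, Lax gives $\norm{p'}_\infty\le\frac n2\norm{p}_\infty$. Combining the two bounds yields the equality.

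\textbf{Multiple zeros.} Proposition~\ref{prop:polar} assumes simple zeros. I will remove that assumption by approximation through Lemma~\ref{lem:simple-approximation}. Coefficientwise convergence gives uniform convergence of both $p_\nu$ and $p_\nu'$ on $\T$, so both sup norms pass to the limit. (Alternatively, the identity $\Dop p=nq^*$ is purely algebraic from $p=p^*$. Only the zero-freeness of $q$ needs simple roots, and zero-freeness is not used in the bound $|p|\le 2|q|$. So the approximation step may not even be needed.)

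\textbf{The inequality and sharpness.} Corollary~\ref{cor:applications}(i) with $(s,t)=(r,\infty)$ gives $\norm{p}_\infty\ge\frac{2}{C_r}\norm{p}_r$, since $C_\infty=2$. Multiplying by $n/2$ gives the stated inequality. The three special cases follow by taking $r=0$ (using $\norm{p}_0=|a_n|$ from Jensen's formula), $r=1$ (where $C_1=4/\pi$), and $r=2$ (where $C_2=\sqrt2$).

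Sharpness comes from $p(z)=z^n-\omega$. Here $\norm{p'}_\infty=n$ and $\norm{p}_r=C_r$, so the inequality holds with equality.

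I expect the main obstacle to be justifying the equality cleanly. The plan is to cite Lax for the upper bound and to use the polar factorization boundary identity for the lower bound, checking that the self-inversive normalization is preserved under the scaling reduction.
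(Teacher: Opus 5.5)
Your proof is correct. It differs from the paper's only in how you obtain the lower bound $\norm{p'}_\infty\ge\frac n2\norm{p}_\infty$.

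\textbf{The lower bound.} The paper cites Tur\'an's reverse inequality, which applies when all zeros lie in $\overline\D$, alongside Erd\H{o}s--Lax. You instead derive it from the polar factorization. After normalizing to $p=p^*$, the identities $q^*=\frac1n\Dop p$ and $p=q+q^*$ follow from coefficient comparison alone. Then $|q^*|=|q|$ on $\T$ gives $n|p|\le 2n|q|=2|p'|$ pointwise on $\T$. As you suspected, no simple-zero approximation is needed, because the zero-freeness of $q$ is never used.

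\textbf{Comparison.} Your route keeps this half self-contained within the paper's own machinery. The paper's citation is shorter and covers all polynomials with zeros in the closed disc, which is more than the corollary requires.

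\textbf{Remaining steps.} Lax for the upper bound, Corollary~\ref{cor:applications}(i) with $(s,t)=(r,\infty)$, the values $C_0=1$, $C_1=4/\pi$, $C_2=\sqrt2$, and sharpness via $z^n-\omega$ all match the paper. You should delete the unfinished display involving $|(p^*)'(z)|$, which plays no role.
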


\begin{proof}
Since $p$ has no zero in $\D$, the Erd\H{o}s--Lax theorem gives
$\norm{p'}_\infty\le(n/2)\norm{p}_\infty$ \cite[pp.~509--513]{Lax44}. Since all zeros of $p$ lie in $\overline\D$, Tur\'an's reverse inequality gives the opposite bound \cite[p.~90, Satz~I]{Tur40}. Hence equality holds in the first part of \eqref{eq:mixed-derivative}; see also \cite[p.~332, Theorem~1]{OR74}. The second part follows from \eqref{eq:reverse-norm} with $t=\infty$. For $p(z)=c(z^n-\omega)$, both inequalities are equalities, so the constant is sharp.
\end{proof}

\subsection{Logarithmic variance and power sums}

For the zeros in \eqref{eq:p-coefficients}, define
\begin{equation}\label{eq:Sk-definition}
  S_k:=\sum_{j=1}^n\zeta_j^k,
  \qquad k\ge1.
\end{equation}

The behavior of the norm ratio at the zero exponent gives a sharp logarithmic variance bound.

\begin{cor}\label{cor:power-sums}
Let $p$ and $S_k$ be defined by \eqref{eq:p-coefficients} and \eqref{eq:Sk-definition}. Then
\begin{equation}\label{eq:log-variance}
  \int_{\T}\bigl(\log|p|-\log|a_n|\bigr)^2\,\dd m
  \ge\frac{\pi^2}{12},
\end{equation}
and equivalently
\begin{equation}\label{eq:power-sum}
  \sum_{k=1}^\infty\frac{|S_k|^2}{k^2}
  \ge\frac{\pi^2}{6}.
\end{equation}
Equality holds when the zeros form a regular $n$-gon.
\end{cor}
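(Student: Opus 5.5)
The plan is to take the second-order expansion of $\log\norm{p}_r$ at $r=0$ and read off the statement from the monotonicity in Theorem~\ref{thm:main}. By scaling we may assume $|a_n|=1$. Then Jensen's formula gives $\norm{p}_0=\norm{Q_n}_0=1$, so $F(r):=\log(\norm{p}_r/\norm{Q_n}_r)$ satisfies $F(0)=0$, and Theorem~\ref{thm:main} gives $F(r)\ge0$ for $r>0$.

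Put $X=\log|p|$, which lies in $L^2(\T,m)$ because the singularities are logarithmic, and $Y=\log|1+z^n|$. Both have mean $0$ by Jensen. For small $r>0$,
\[
\log M_p(r)=\frac{r^2}{2}\int X^2\,\dd m+o(r^2).
\]
To justify this, I would first establish it for exponents $0<r<\varepsilon$ from the bound $|p|^{r}\le C$, and then use dominated convergence with $|e^{rX}-1-rX|\le \tfrac{r^2}{2}X^2\max(1,e^{rX})$. This domination is integrable because $X^2$ is integrable and $e^{rX}\le\norm{p}_\infty^r$. Dividing by $r$ gives
\[
\log\norm{p}_r=\frac r2\int X^2\,\dd m+o(r),
\]
and the same holds for $Q_n$. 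From $F(r)\ge0$, dividing by $r$ and letting $r\downarrow0$ yields
\[
\int X^2\,\dd m\ge\int Y^2\,\dd m .
\]

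Next we compute both sides by Fourier series. Write $\log|z-\zeta_j|=-\Rea\sum_{k\ge1}(\bar\zeta_j z)^k/k$ on $\T$; this converges in $L^2$. Summing over $j$ gives
\[
X=-\Rea\sum_{k\ge1}\frac{\overline{S_k}z^k}{k},
\]
so Parseval gives
\[
\int X^2\,\dd m=\frac12\sum_{k\ge1}\frac{|S_k|^2}{k^2}.
\]
This proves the equivalence of \eqref{eq:log-variance} and \eqref{eq:power-sum}. For $Q_n$ (after rotation, zeros forming a regular $n$-gon), $S_k=n$ when $n\mid k$ and $S_k=0$ otherwise. Hence
\[
\sum_k\frac{|S_k|^2}{k^2}=\sum_{j\ge1}\frac{n^2}{n^2j^2}=\frac{\pi^2}{6},
\]
and $\int Y^2\,\dd m=\pi^2/12$. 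This gives both inequalities and the equality case, since $-z^n-1$ rotated has this form and the power sums are rotation invariant in modulus.

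The main obstacle is the rigorous second-order expansion near $r=0$. The first-order argument in the paper only handles $o(r)$ for $M_p$, and here we need $o(r^2)$, which requires the dominated-convergence bound above together with $X\in L^2$.
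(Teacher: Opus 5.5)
Your proposal is correct and follows essentially the same route as the paper: you justify a second-order small-$r$ expansion of $\int_{\T}|p|^r\,\dd m$ by dominated convergence, use $F(r)\ge F(0)$ from Theorem~\ref{thm:main} to compare the variances of $\log|p|$ and $\log|Q_n|$, and finish with Parseval applied to the Fourier series of $\log|z-\zeta_j|$. The differences are cosmetic: you normalize $|a_n|=1$ so that the means vanish, and you compute the model value from the regular $n$-gon power sums rather than from the Fourier series of $\log|1+\e^{i\theta}|$. (Strictly, $S_k=n(-1)^{k/n}$ when $n\mid k$, but only $|S_k|=n$ is used.)
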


\begin{proof}
For a nonzero polynomial $f$, define
$Y_f(\zeta):=\log|f(\zeta)|$ when $f(\zeta)\ne0$, and set
$Y_f(\zeta):=0$ on its finite zero set. Then
$Y_f\in L^q(\T,m)$ for every finite $q$, its positive part is bounded,
and $\e^{rY_f}=|f|^r$ almost everywhere. On $\{Y_f<0\}$ and for $0<r\le1$,
\[
  0\le \e^{rY_f}-1-rY_f\le \frac{r^2Y_f^2}{2};
\]
on $\{Y_f\ge0\}$ the same remainder is bounded by $C_f r^2Y_f^2$, because $Y_f^+$ is bounded. The dominated convergence theorem therefore yields
\[
  \int_{\T}\e^{rY_f}\,\dd m
  =1+r\int_{\T}Y_f\,\dd m
   +\frac{r^2}{2}\int_{\T}Y_f^2\,\dd m+o(r^2).
\]
Taking the logarithm and dividing by $r$ gives
\begin{equation}\label{eq:small-r-expansion}
  \log\norm{f}_r
  =\int_{\T}Y_f\,\dd m
   +\frac r2\Var_m(Y_f)+o(r)
  \qquad(r\downarrow0),
\end{equation}
where $
\Var_m(Y)
:=\int_{\T}Y^2\,\dd m
-\left(\int_{\T}Y\,\dd m\right)^2
$
denotes the variance with respect to $m$.
Set
\[
F(r):=\log\norm{p}_r-\log\norm{Q_n}_r.
\]
By Theorem~\ref{thm:main}, $F(r)\ge F(0)$ for every $r>0$.
Equation~\eqref{eq:small-r-expansion} shows that the right derivative
$F'(0+)$ exists; dividing $F(r)-F(0)\ge0$ by $r$ and letting
$r\downarrow0$ gives $F'(0+)\ge0$. Hence
\[
  \Var_m(\log|p|)
  \ge\Var_m(\log|Q_n|).
\]
Jensen's formula gives $\int\log|p|\,\dd m=\log|a_n|$, while
\[
  \log|1+\e^{i\theta}|
  =\sum_{k=1}^\infty\frac{(-1)^{k-1}}k\cos(k\theta)
  \quad\text{in }L^2(0,2\pi).
\]
Because $z\mapsto z^n$ preserves the normalized Haar measure $m$, the variance of $\log|Q_n|$ equals that of $\log|1+z|$. Parseval's identity therefore gives
$\Var_m(\log|Q_n|)=\frac12\sum_{k\ge1}k^{-2}=\pi^2/12$, proving \eqref{eq:log-variance}.

For each unimodular $\zeta_j$,
\[
  \log|\e^{i\theta}-\zeta_j|
  =-\sum_{k=1}^\infty\frac1k
    \Rea\bigl(\overline{\zeta_j}^{\,k}\e^{ik\theta}\bigr)
  \quad\text{in }L^2(0,2\pi).
\]
Summing over $j$ and applying Parseval gives
\[
  \int_{\T}\bigl(\log|p|-\log|a_n|\bigr)^2\,\dd m
  =\frac12\sum_{k=1}^\infty\frac{|S_k|^2}{k^2}.
\]
Thus \eqref{eq:power-sum} is equivalent to \eqref{eq:log-variance}. If the zeros form a regular $n$-gon, then $S_k=0$ unless $n\mid k$, while $|S_{jn}|=n$; equality follows.
\end{proof}

\subsection{Relative entropy at every exponent}

Differentiating the monotone norm ratio at a positive exponent recovers the entropy comparison itself.

\begin{cor}\label{cor:entropy}
Let $p$ be as in \eqref{eq:p-coefficients}, and set $Q_n(z):=1+z^n$. For $r>0$ and $f\in\{p,Q_n\}$, define $\rho_{f,r}$ by \eqref{eq:moment-density}. Then
\begin{equation}\label{eq:entropy-corollary}
  \mathsf D(\rho_{p,r}\Vert m)
  \ge\mathsf D(\rho_{Q_n,r}\Vert m),
  \qquad r>0.
\end{equation}
Equality is attained by $p(z)=c(z^n-\omega)$ with $c\ne0$ and $|\omega|=1$.
\end{cor}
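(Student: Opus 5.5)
The plan is to derive \eqref{eq:entropy-corollary} directly from Theorem~\ref{thm:main} by differentiating in the exponent, using the identity of Lemma~\ref{lem:entropy-derivative}. That lemma is stated for an arbitrary nonzero polynomial $f$. So for every $r>0$ and each $f\in\{p,Q_n\}$ we have
\[
 r^2\frac{\dd}{\dd r}\log\norm{f}_r=\mathsf D(\rho_{f,r}\Vert m),
\]
with no normalization or simplicity of zeros required. Set
\[
F(r):=\log\norm{p}_r-\log\norm{Q_n}_r .
\]
This function is differentiable on $(0,\infty)$, since $M_f$ is differentiable there, as shown in the proof of Lemma~\ref{lem:entropy-derivative}. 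Its derivative satisfies
\[
 r^2F'(r)=\mathsf D(\rho_{p,r}\Vert m)-\mathsf D(\rho_{Q_n,r}\Vert m).
\]

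Next we invoke Theorem~\ref{thm:main}, now valid for an arbitrary $p$ as in \eqref{eq:p-coefficients}, including repeated zeros and any leading coefficient. It says that $F$ is nondecreasing on $[0,\infty]$, in particular on $(0,\infty)$. A differentiable nondecreasing function has $F'(r)\ge0$ at every point: the difference quotients $(F(r+h)-F(r))/h$ are nonnegative for $h>0$, and they converge to $F'(r)$. Multiplying by $r^2>0$ gives \eqref{eq:entropy-corollary}.

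For the equality case, take $p(z)=c(z^n-\omega)$. By rotation invariance we have $|p(\zeta)|=|c|\,|Q_n(\eta\zeta)|$ for a suitable $\eta\in\T$ with $\eta^n=-\omega$. Hence $\rho_{p,r}$ is the pushforward of $\rho_{Q_n,r}$ under a rotation; the scalar $|c|^r$ cancels in the normalized density. Relative entropy with respect to the rotation-invariant measure $m$ is unchanged by rotations, so the two sides of \eqref{eq:entropy-corollary} agree. Equivalently, $F$ is constant in this case, because $\norm{p}_r=|c|C_r$, and so $F'\equiv0$.

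There is essentially no obstacle. The only point needing care is that the differentiability in Lemma~\ref{lem:entropy-derivative} is established for general polynomials, including those with multiple zeros. The compact-domination argument there uses only boundedness of $|f|$ and continuity of $x^u\log x$ at $x=0$, so it applies without change. This lets us avoid the normalization and simple-zero reductions of Sections~\ref{sec:reduction}--\ref{sec:completion} altogether.
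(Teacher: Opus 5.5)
Your proposal is correct and follows the paper's proof essentially verbatim: differentiability and the identity $r^2F'(r)=\mathsf D(\rho_{p,r}\Vert m)-\mathsf D(\rho_{Q_n,r}\Vert m)$ come from Lemma~\ref{lem:entropy-derivative}, nonnegativity of $F'$ comes from Theorem~\ref{thm:main}, and equality comes from rotation invariance. One small slip: with $\eta^n=-\omega$ the correct identity is $|p(\eta\zeta)|=|c|\,|Q_n(\zeta)|$, or equivalently $|p(\zeta)|=|c|\,|Q_n(\eta\zeta)|$ with $\eta^n=-\overline{\omega}$; your alternative observation that $\norm{p}_r=|c|C_r$ makes $F$ constant already settles the equality case independently.
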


\begin{proof}
The function
$r\mapsto\log\norm{p}_r-\log\norm{Q_n}_r$ is differentiable for $r>0$ and nondecreasing by Theorem~\ref{thm:main}. Its derivative is therefore nonnegative. Lemma~\ref{lem:entropy-derivative} identifies $r^2$ times that derivative with the left-hand side minus the right-hand side of \eqref{eq:entropy-corollary}.

For $p(z)=c(z^n-\omega)$, multiplication by $c$ cancels in the normalized density. Choose $\eta\in\T$ with $\eta^n=-\omega$; then
$|(\eta z)^n-\omega|=|1+z^n|$. Rotation invariance therefore shows that $\rho_{p,r}$ and $\rho_{Q_n,r}$ have the same entropy, proving the equality assertion.
\end{proof}

%
%

\end{document}